\theoremstyle{plain} 
\newtheorem{thm}{Theorem}
\newtheorem{prop}{Proposition} 
\newtheorem{coro}{Corollary} 
\theoremstyle{definition} 
\newtheorem{lem}{Lemma}
\theoremstyle{remark} 
\newtheorem{rem}{Remark}
\newtheoremstyle %
   {sautligne}
   {1ex}
   {1ex}
   {\itshape}
   {-1ex}
   {\rmfamily\scshape}   
   {.}
   {\newline}
   {}
\newcounter{exercice}
\newcounter{probleme}
\begin{document}

\title{Exponential ergodicity for a class of non-Markovian stochastic processes}

\author{Laure Pédèches}
\address{Institut de Mathématiques de Toulouse UMR5219, Université de Toulouse,UPS IMT, F-31062 Toulouse Cedex 9, France}
\email{laure.pedeches@math.univ-toulouse.fr}
\keywords {cluster expansion, SDE with delay, long-time behaviour}
\date{June 28th, 2016}

\begin{abstract}
We prove the convergence at an exponential rate towards the invariant probability measure for a class of solutions of stochastic differential equations with finite delay. This is done, in this non-Markovian setting, using the cluster expansion method, inspired from \cite{DR} or \cite{MRZ}. As a consequence, the results hold for small perturbations of ergodic diffusions.
\end{abstract}

\maketitle

\section*{Introduction}

The aim of this paper is to prove the exponential ergodicity towards the unique invariant probability measure for some solutions of stochastic differential equations with finite delay and non regular drift.\\

We will consider $\mathbb{R}^d$-valued stochastic differential equations of the form :
\begin{equation*}
dx_t=\left(g(x_t) + \sigma \; b((x)_{t-t_0}^t)\right)dt + \sigma \; dB_t
\end{equation*}
for a small additional drift term $b$, whose only requirement is to be measurable and bounded, and with certain assumptions on the underlying semi-group of the reference process solution of \begin{equation*}
dy_t=g(y_t)dt+\sigma \; dB_t
\end{equation*}

Such an interest in those equations comes from a desire to obtain similar results for stochastic Cucker-Smale type models (such as the one presented by Ha, Lee and Levy in \cite{HLL}), which should be included in a future work.\\

While results about the existence  of invariant probability measures for stochastic differential equations with delay can be found in the literature, going back to the 1964 paper of Itô and Nisio \cite{IN} (see \cite{IKS} for a survey on the topic published in 2003 by Ivanov, Kazmerchuk and Swishchuk), they are mainly valid under strong regularity assumptions on the coefficients. Furthermore, very little has been told about exponential ergodicity for such processes.\\

As we are dealing with non-Markovian processes, most standard methods are not available to us. Our main tool here will be the so-called cluster expansion method, mainly used in statistical mechanics, in particular in Gibbs field theory. As a consequence, our results will hold for irregular but small (albeit not insignificant) perturbations of the reference process.\\

Technical results for the adaptation of the cluster expansion methods to Gibbs random fields can be found in the book \cite{MM} by Malyshev and Minlos. Subsequent papers have implemented those methods for stochastic processes, for example, interacting diffusions systems or one-dimensional non-Markovian diffusions. It was done by Ignatyuk, Malyshev and Sidoravicius in \cite{IMS}, and, more recently, amongst others in \cite{DR}, \cite{DRZ} or \cite{MRZ}.\\ 

Our main result, the exponential convergence of the semi-group towards the invariant probability measure, implies that the decay of correlations is exponentially quick, i.e. we have what is called exponential decorrelation : there exist two constants $\theta_1$ and $\theta_2$ such that for all $f$ and $g$ measurable and bounded by $1$, it holds : \begin{equation*}
|cov(f(x_t),g(x_{t+s}))| \leqslant \theta_1 \; e^{-\theta_2 s}
\end{equation*}

It follows that a central limit theorem can be derived from the mixing properties implied by this inequality.\\

Contrary to what was done in \cite{DR}, we do not require for the semi-group associated with the process to be ultracontractive, but only need some strong form of hypercontractivity. One instance of a well-known process which is not ultracontractive but verifies our assumptions is the Ornstein-Uhlenbeck process. We will present some explicit results in this particular setting. Actually, the stochastic Cucker-Smale model can be seen as a mean-field perturbation of the Ornstein-Uhlenbeck process, and this led to this work.\\

Under those less restrictive hypotheses, the lack of ultracontractive bounds for the underlying reference process introduces several new technical difficulties. We will therefore present a detailed proof for the obtaining of the cluster estimates. The use of these estimates to get to the final main theorem follows the lines of, for instance, \cite{LS} and will not be detailed here.\\

We start by introducing our framework, the objects we will manipulate and the assumptions that will be needed. Then, we set out to obtain a cluster representation for the partition function defined in the first part. In section 3, we study the cluster estimates and show that they tend to $0$ when the norm of $b$ does. We present the main results in section 4. Finally, in section 5, we explicitly compute some of the bounds in the Ornstein-Uhlenbeck setting.\\

\section{Framework and assumptions}

Let $\Omega$ be the space of real $\mathbb{R}^d$-valued continuous functions for some $d\geqslant 1$.

\subsection{A few notations}

We set the following notations :
\begin{itemize}
\item $a$, a fixed positive number, destined to become quite large ;
\item for every $j$ in $\mathbb{Z}$, $I_j=[ja, (j+1)a]$ ;
\item for every $N$ in $\mathbb{N}^*$, $I(N)=[-Na, Na]=\bigcup_{j=-N}^{N-1}I_j$ ;
\item for every $u$ in $\Omega$, $u^{(N)}(t)= u(Na)$ if $t\geqslant Na$, $u^{(N)}(t)= u(t)$ if $-Na\leqslant t \leqslant Na$, and $u^{(N)}(t)= u(-Na)$ if $t\leqslant -Na$. That is : $u^{(N)}$ is equal to $u$ frozen outside of the interval $I(N)$.
\end{itemize} 

\subsection{The stochastic processes under consideration : hypotheses (H1) and (H2)}\label{1.2}

We introduce here the process which will serve as a reference in our work : a stochastic process sufficiently regular to be exponentially ergodic towards its invariant probability measure. We present all the assumptions that will be necessary to extend this ergodicity to small perturbations of this process.\\

We consider the following stochastic differential equation :

\begin{equation}\label{1}
dy_t=g(y_t)dt+\sigma \; dB_t
\end{equation}
with $g:\mathbb{R}^d\rightarrow \mathbb{R}^d$ a smooth function, $\sigma$ an invertible $d\times d$ matrix, and $B$ a standard $d$-dimensional Brownian motion.\\

We assume that $(y_t)_{t\in\mathbb{R}_+}$ is a $\mathbb{R}^d$-valued stochastic process, solution of (\ref{1}) with initial condition $y_0=y^0$. For every $f$ regular enough, we define $P_t f$ its semi-group at time $t$, that is, for any given $x$, $P_t f(x)=\mathbb{E}_x[f(y_t)]$, and its infinitesimal generator $L=\frac{1}{2}\sum_{i,j}a_{ij}\partial^2_{ij}+\sum_i g_i \partial i$, with $a_{ij}=(\sigma\sigma^*)_{ij}$.

We will suppose that $(y_t)$ admits a symmetric probability measure $\mu$, i.e. for all $f$ and $g$ smooth enough, $\int f L g \: d\mu = \int g L f \: d\mu $. Thus, $\mu$ is an invariant measure : for every $f$ smooth enough, $\mathbb{E}_{\mu}[f(y_t)]=\int f \; d\mu$. Since there exists $\alpha>0$ such that for all $\xi\in\mathbb{R}^d$, $\xi^*\sigma\sigma^*\xi \geqslant \alpha |\xi|^2$, the generator is uniformly elliptic and it is well-known that $\mu$ is absolutely continuous with respect to Lebesgue measure with a positive density. Thus, $\mu$ is of the form $d\mu(x)=C\;e^{-V(x)}dx$ ; in addition, it is known that $V:\mathbb{R}^d\rightarrow \mathbb{R}$ is smooth.\\

According to Kolmogorov's characterization of reversible diffusions, in \cite{KK}, this even implies that $\mu$ is symmetric for the semi-group $(P_t)_{t\geqslant 0}$ and that $g$ can be written as $g=-\frac{1}{2}\;\sigma\;\nabla V$. Thus, equation (\ref{1}) becomes :
\begin{equation}\label{1r}
dy_t=-\frac{1}{2}\;\sigma\;\nabla V(y_t)dt+\sigma \; dB_t
\end{equation}

Our assumptions imply that $(y_t)$ is non-explosive and $(P_t)$ admits a smooth transition density with respect to $\mu$, denoted by $p(t,x,y)$. As the process $(y_t)_{t\geqslant 0}$ is reversible with respect to the stationary measure $\mu$, $$\forall \; t, x, y, \quad p(t,x,y)=p(t,y,x).$$

We now introduce two hypotheses which will be essential in the following :\\

\begin{itemize}
\item (H1) : $\mu$ satisfies a Poincaré inequality : there exists a constant $C_P$ such that for all smooth functions $f$ in $\mathcal{L}^2(\mu)$, 
\begin{equation*}
\|\;f-\int f\;d\mu \;\|^2_{\mathcal{L}^2(\mu)} \leqslant C_P \int |\sigma \; \nabla f |^2 \; d\mu 
\end{equation*}

\item (H2) : There exists $\delta\geqslant 0$ such that $$\sup_{t\geqslant\delta}\|p(t,., .)\|_{\mathcal{L}^8(\mu\otimes\mu)}<\infty$$
\end{itemize}

\begin{rem} Actually, we only require $\|p(t,., .)\|_{\mathcal{L}^8(\mu\otimes\mu)}$ to be finite for certain values of $t$ and it is not necessary for the supremum over $t$ to be finite. However, the current form of (H2) simplifies the writing of the proofs, and it is satisfied by the important case of the Ornstein-Uhlenbeck process, as will be seen in section \ref{OU}.
\end{rem}

\begin{rem}\label{rem2}
Using Cauchy-Schwarz's and Jensen's inequalities,

\begin{center}
$\begin{array}{lll}
\|P_{\delta} f \|_{\mathcal{L}^8(\mu)}^8 & = & \int \left( \int p(\delta,x,y)f(y)\; \mu(dy) \right)^8 \mu(dx) \\
& \leqslant & \int \left( \int |f(y)|^2 \; \mu(dy) \right)^4 \left( \int p(\delta,x,y)^2 \;\mu(dy) \right)^4 \mu(dx) \\
& \leqslant & \|f\|_{\mathcal{L}^2(\mu)}^8 \int \int p(\delta,x,y)^8 \;\mu(dy) \mu(dx) 
\end{array}$
\end{center}

This means that :
\begin{equation}\label{int1}
\|P_{\delta} f \|_{\mathcal{L}^8(\mu)} \leqslant  \|p(\delta,., .)\|_{\mathcal{L}^8(\mu\otimes\mu)} \; \|f \|_{\mathcal{L}^2(\mu)}.
\end{equation}

Thus, if (H2) is satisfied, $P_{\delta} : \mathcal{L}^2(\mu) \rightarrow \mathcal{L}^8(\mu)$ is a bounded operator and $(P_t)_t$ is hyperbounded, that is, it verifies a defective log-Sobolev inequality : there exists non-negative constants $C_{LS}$ and $\rho$ such that for all smooth functions $f$ in $\mathcal{L}^2(\mu)$, 
\begin{equation*}
\int f^2 \ln(f^2)\;d\mu - \int f^2 \;d\mu \; \ln\;(\int f^2 d\mu) \leqslant C_{LS} \int |\sigma \; \nabla f |^2 \; d\mu + \rho \int f ^2 d\mu 
\end{equation*}
If, in addition, (H1) is satisfied, the semi-group $(P_t)_t$ is hypercontractive, i.e. for every $k\geqslant2$, there exists $\delta_k$ such that for every $t\geqslant\delta_k$, $P_t:\mathbb{L}^2\rightarrow\mathbb{L}^k$ is bounded by $1$. Equivalently, it means that $\mu$ verifies a tight log-Sobolev inequality : there exists a non-negative constant $\bar{C}_{LS}$  such that for all smooth functions $f$ in $\mathcal{L}^2(\mu)$, 
\begin{equation*}
\int f^2 \ln(f^2)\;d\mu - \int f^2 \;d\mu \; \ln\;(\int f^2 d\mu) \leqslant \bar{C}_{LS} \int |\sigma \; \nabla f |^2 \; d\mu
\end{equation*}
\end{rem}

\begin{rem} It is well-known (see \cite{ABC}  for example) that hypothesis (H1) is equivalent to the exponential convergence of the semi-group towards $\mu$ :\\

(H1b) : There exists a constant $C_S$ such that for all $f\in\mathcal{L}^2(\mu)$, for all $t\geqslant 0$,
\begin{equation*}
\|\;P_t f-\int f\;d\mu \;\|_{\mathcal{L}^2(\mu)} \leqslant e^{-C_S t} \|\;f-\int f\;d\mu \;\|_{\mathcal{L}^2(\mu)} 
\end{equation*}
and, moreover, $C_S=1/C_P$ (see \cite{CGZ} for a more general statement).\\

In particular, if (H1) holds and $\int f d\mu =0$, then, for every $t\geqslant 0$,
 \begin{equation}\label{h3}
\|P_t f \|_{\mathcal{L}^2(\mu)} \leqslant e^{-t/C_P} \|f \|_{\mathcal{L}^2(\mu)} 
\end{equation}
\end{rem}

\vspace{1\baselineskip}

We now prove two lemmas, the second one, resulting in part from the first one, taking into account hypotheses (H1) and (H2) and yielding the assumption we will use in practice, rather than (H1) itself.\\

\begin{lem}\label{lem2}
Suppose that (H1) holds true. Then for all smooth $f$ such that $\int f d\mu =0$,
\begin{equation*}
\forall t \geqslant \delta, \quad \|P_t f \|_{\mathcal{L}^8(\mu)} \leqslant \|p(\delta,., .)\|_{\mathcal{L}^8(\mu\otimes\mu)} \; e^{-(t-\delta)/C_P} \; \|f \|_{\mathcal{L}^8(\mu)}  
\end{equation*}
\end{lem}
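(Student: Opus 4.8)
The plan is to use the semigroup property to split $P_t = P_\delta \circ P_{t-\delta}$ for $t\geq\delta$, putting the $\mathcal{L}^2(\mu)\to\mathcal{L}^8(\mu)$ smoothing of Remark \ref{rem2} onto the last factor $P_\delta$ and the exponential $\mathcal{L}^2(\mu)$-decay onto $P_{t-\delta}$. Concretely, I would first write $P_t f = P_\delta\big(P_{t-\delta}f\big)$ and apply the bound (\ref{int1}) with the function $P_{t-\delta}f$ in place of $f$, which gives
\[
\|P_t f\|_{\mathcal{L}^8(\mu)} \;\leq\; \|p(\delta,\cdot,\cdot)\|_{\mathcal{L}^8(\mu\otimes\mu)}\,\|P_{t-\delta}f\|_{\mathcal{L}^2(\mu)}.
\]

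Next I would estimate $\|P_{t-\delta}f\|_{\mathcal{L}^2(\mu)}$. Since $\int f\,d\mu=0$ and $\mu$ is invariant, we also have $\int P_{t-\delta}f\,d\mu=0$, so the contraction estimate (\ref{h3}) — which is precisely (H1b), equivalent to the Poincar\'e inequality (H1) — applies (note $t-\delta\geq 0$) and yields
\[
\|P_{t-\delta}f\|_{\mathcal{L}^2(\mu)} \;\leq\; e^{-(t-\delta)/C_P}\,\|f\|_{\mathcal{L}^2(\mu)}.
\]
Finally, because $\mu$ is a probability measure, Jensen's inequality (equivalently, the monotonicity of the $\mathcal{L}^p(\mu)$-norms) gives $\|f\|_{\mathcal{L}^2(\mu)}\leq\|f\|_{\mathcal{L}^8(\mu)}$. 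Chaining the three inequalities produces the stated bound.

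There is no genuine difficulty here; the steps to be careful about are merely that $P_{t-\delta}f$ is regular enough and lies in $\mathcal{L}^2(\mu)$ so that (\ref{int1}) and (\ref{h3}) may legitimately be invoked — which follows from the smoothing properties of $(P_t)$ under the standing uniform ellipticity assumption — and that the zero-mean condition $\int f\,d\mu=0$ is preserved by the flow, which is immediate from the invariance of $\mu$.
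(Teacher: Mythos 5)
Your proof is correct and follows exactly the same route as the paper: split $P_t = P_\delta \circ P_{t-\delta}$, apply the bound (\ref{int1}) to pass from $\mathcal{L}^2$ to $\mathcal{L}^8$, use the Poincar\'e-equivalent contraction (\ref{h3}) on $P_{t-\delta}f$, and conclude with $\|f\|_{\mathcal{L}^2(\mu)}\leq\|f\|_{\mathcal{L}^8(\mu)}$. The remark about zero-mean being preserved by the flow is harmless but unnecessary here, since (\ref{h3}) is applied with $f$ itself (which has zero mean by hypothesis) rather than to $P_{t-\delta}f$.
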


\begin{rem}
It is possible for both sides of the above inequality to be infinite.
\end{rem}

\begin{proof}
Let $t$ be a positive number with $t\geqslant\delta$ and $f$ a smooth function such that $\int f d\mu =0$.\\

For any $t\geqslant \delta$, thanks to the inequality (\ref{int1}) proven in remark \ref{rem2},

\begin{center}
$\begin{array}{lll}
\|P_{t} f \|_{\mathcal{L}^8(\mu)} & = & \|P_{\delta} (P_{t-\delta} f) \|_{\mathcal{L}^8(\mu)} \\
& \leqslant &  \|p(\delta,., .)\|_{\mathcal{L}^8(\mu\otimes\mu)} \; \|P_{t-\delta} f \|_{\mathcal{L}^2(\mu)}
\end{array}$
\end{center}

As (H1) is supposed to be satisfied, so is (\ref{h3}) ; hence the conclusion of this proof :

\begin{center}
$\begin{array}{lll}
\|P_{t} f \|_{\mathcal{L}^8(\mu)} & \leqslant &  \|p(\delta,., .)\|_{\mathcal{L}^8(\mu\otimes\mu)} \; e^{-(t-\delta)/C_P} \|f \|_{\mathcal{L}^2(\mu)} \\
 & \leqslant &  \|p(\delta,., .)\|_{\mathcal{L}^8(\mu\otimes\mu)} \; e^{-(t-\delta)/C_P} \|f \|_{\mathcal{L}^8(\mu)} 
\end{array}$
\end{center}
\end{proof}

\begin{lem}\label{lem1}
Under hypotheses (H1) and (H2), for $t\geqslant 2 \delta $, $$\| p(t,x,y)-1\|_{\mathcal{L}^8(\mu\otimes\mu)}\leqslant \beta_{\delta}(t) \left(\|p(\delta,., .)\|_{\mathcal{L}^8(\mu\otimes\mu)}\vee 1 \right)$$

where $\beta_{\delta}(t)=2 M_{\delta} \; e^{-(t-2\delta)/C_P}$ with $M_{\delta}= \sup_{a\geqslant\delta}\|p(a,., .)\|_{\mathcal{L}^8(\mu\otimes\mu)}\vee 1$
\\

In particular, $\beta_{\delta}(t)$ goes to $0$ exponentially fast when $t$ goes to infinity.\\
\end{lem}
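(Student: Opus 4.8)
The plan is to estimate $\|p(t,\cdot,\cdot)-1\|_{\mathcal{L}^8(\mu\otimes\mu)}$ by writing $p(t,x,y)-1 = P_t\varphi_y(x)$ for a suitable centered function, then applying Lemma \ref{lem2} to extract the exponential decay, and finally integrating the remaining $\mathcal{L}^8(\mu)$-norm against $\mu(dy)$. Concretely, for fixed $y$ set $\varphi_y(x) := p(\delta,x,y)-1$. Since $\mu$ is invariant, $\int p(\delta,x,y)\,\mu(dx) = 1$ (using reversibility $p(\delta,x,y)=p(\delta,y,x)$ and $\int p(\delta,y,x)\,\mu(dx)=1$), so $\int \varphi_y\,d\mu = 0$. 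Moreover, by the semigroup/Chapman--Kolmogorov property, $P_{t-\delta}\varphi_y(x) = \int p(t-\delta,x,z)p(\delta,z,y)\,\mu(dz) - 1 = p(t,x,y) - 1$ for $t\geq\delta$.

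First I would apply Lemma \ref{lem2} to $f = \varphi_y$ with the roles shifted: for $t\geq 2\delta$ we have $t-\delta\geq\delta$, hence
\begin{equation*}
\|p(t,\cdot,y)-1\|_{\mathcal{L}^8(\mu)} = \|P_{t-\delta}\varphi_y\|_{\mathcal{L}^8(\mu)} \leq \|p(\delta,\cdot,\cdot)\|_{\mathcal{L}^8(\mu\otimes\mu)}\, e^{-(t-2\delta)/C_P}\,\|\varphi_y\|_{\mathcal{L}^8(\mu)}.
\end{equation*}
Then I would raise this to the $8$th power, integrate in $y$ against $\mu(dy)$, and use the triangle inequality $\|\varphi_y\|_{\mathcal{L}^8(\mu)} = \|p(\delta,\cdot,y)-1\|_{\mathcal{L}^8(\mu)} \leq \|p(\delta,\cdot,y)\|_{\mathcal{L}^8(\mu)} + 1$. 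This yields, after taking $8$th roots,
\begin{equation*}
\|p(t,\cdot,\cdot)-1\|_{\mathcal{L}^8(\mu\otimes\mu)} \leq \|p(\delta,\cdot,\cdot)\|_{\mathcal{L}^8(\mu\otimes\mu)}\, e^{-(t-2\delta)/C_P}\,\bigl(\|p(\delta,\cdot,\cdot)\|_{\mathcal{L}^8(\mu\otimes\mu)} + 1\bigr),
\end{equation*}
where I have bounded $\bigl(\int\|p(\delta,\cdot,y)\|_{\mathcal{L}^8(\mu)}^8\,\mu(dy)\bigr)^{1/8}$ crudely by $\|p(\delta,\cdot,\cdot)\|_{\mathcal{L}^8(\mu\otimes\mu)}$ and absorbed the additive $1$ via the $\vee 1$ factors. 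Replacing $\|p(\delta,\cdot,\cdot)\|_{\mathcal{L}^8(\mu\otimes\mu)}$ by $M_\delta\geq\|p(\delta,\cdot,\cdot)\|_{\mathcal{L}^8(\mu\otimes\mu)}\vee 1$ in the decay prefactor, bounding the first factor by $\|p(\delta,\cdot,\cdot)\|_{\mathcal{L}^8(\mu\otimes\mu)}\vee 1$, and noting $\|p(\delta,\cdot,\cdot)\|_{\mathcal{L}^8(\mu\otimes\mu)}+1 \leq 2M_\delta$, gives exactly $\beta_\delta(t)\bigl(\|p(\delta,\cdot,\cdot)\|_{\mathcal{L}^8(\mu\otimes\mu)}\vee 1\bigr)$ with $\beta_\delta(t) = 2M_\delta e^{-(t-2\delta)/C_P}$.

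The main subtlety — rather than a deep obstacle — is the bookkeeping needed to justify that $P_{t-\delta}$ can be applied to the (a priori only $\mathcal{L}^2$, by (H2)) function $\varphi_y$ and that Fubini/Chapman--Kolmogorov manipulations are legitimate; here (H2) guarantees $\varphi_y\in\mathcal{L}^2(\mu)$ for $\delta$-a.e. $y$ (in fact in $\mathcal{L}^8$), and the transition density is smooth, so these are safe. One should also be slightly careful that Lemma \ref{lem2} is stated for times $\geq\delta$ and with the kernel at time $\delta$, which is why the hypothesis $t\geq 2\delta$ is needed. The exponential decay of $\beta_\delta$ is then immediate since $M_\delta<\infty$ by (H2) and $e^{-(t-2\delta)/C_P}\to 0$.
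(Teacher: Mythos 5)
Your proof is correct and follows essentially the same route as the paper: you write $p(t,x,y)-1 = P_{t-\delta}\bigl(p(\delta,\cdot,y)-1\bigr)(x)$, apply Lemma \ref{lem2} at time $t-\delta\geq\delta$, and then integrate in $y$, which is exactly the paper's argument. The only cosmetic difference is in the $y$-integration bookkeeping (the paper bounds $\|p(\delta,\cdot,y)-1\|_{\mathcal{L}^8(\mu)}^8\leq 2^8\bigl(\|p(\delta,\cdot,y)\|_{\mathcal{L}^8(\mu)}^8\vee 1\bigr)$ pointwise before integrating, while you use the triangle inequality plus Minkowski in $y$); both yield the same constant $\beta_\delta(t)=2M_\delta e^{-(t-2\delta)/C_P}$.
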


\begin{proof}
We start by expressing $p(t,x,y)-1$ using the semi-group :\\

$\begin{array}{lll}
p(t,x,y)-1 & = & \int \left(p(t-\delta,x,z)p(\delta,z,y)-1\right)\mu(dz) \\
& = & P_{t-\delta}(p(\delta,.,y))(x)-1 \\
& = & P_{t-\delta}(p(\delta,.,y)-1)(x) 
\end{array}$

\vspace{1\baselineskip}

Thus, applying lemma \ref{lem2} for $f=p(\delta,.,y)-1$ at time $t-\delta$ ($\geqslant \delta$ as $t\geqslant 2 \delta$),\\

$\begin{array}{lll}
\int (p(t,x,y)-1)^8\mu(dx) & = & \int P^8_{t-\delta}(p(\delta,.,y)-1) (x)\;\mu(dx) \\
& \leqslant &   \|p(\delta,., .)\|_{\mathcal{L}^8(\mu\otimes\mu)}^8 \; e^{-8(t-2\delta)/C_P} \|p(\delta,.,y)-1 \|_{\mathcal{L}^8(\mu)}^8 \\
& \leqslant & 2^8\|p(\delta,., .)\|_{\mathcal{L}^8(\mu\otimes\mu)}^8 \; e^{-8(t-2\delta)/C_P} \left( \|p(\delta,., y)\|^8_{\mathcal{L}^8(\mu)}\vee 1\right),
\end{array}$\\

which leads to :

\begin{equation*}
\int (p(t,x,y)-1)^8\mu(dx) \mu(dy) \leqslant 2^8 \|p(\delta,., .)\|_{\mathcal{L}^8(\mu\otimes\mu)}^8 \; e^{-8(t-2\delta)/C_P} \left( \|p(\delta,.,.)\|^8_{\mathcal{L}^8(\mu\otimes\mu)}\vee 1 \right)
\end{equation*} 

\vspace{1\baselineskip}

Hypothesis (H2) ensures that $\|p(u,.,.)\|_{\mathcal{L}^8(\mu\otimes\mu)}$ is bounded uniformly in $u$ for $u\geqslant \delta$, hence the result.
\end{proof}

\begin{rem}
As can be seen in the proof, $M_{\delta}$ is a priori not the optimal bound (although it can correspond with $\|p(\delta,.,.)\|_{\mathcal{L}^8(\mu\otimes\mu)}$ as will be seen in the Ornstein-Uhlenbeck example in section \ref{OU}) but will be good enough for our needs (and will simplify later computations).
\end{rem}

\vspace{1\baselineskip}

\emph{In what follows, $P$ will be the law, on $\Omega$, of the stationary solution of (\ref{1r}). It will be our reference measure.}\\

\subsection{The perturbed equation}

Having established properties and assumptions needed for the reference process, we turn our attention to the stochastic differential equation with finite delay $t_0$ :

\begin{equation}\label{2}
dx_t=\left(-\frac{1}{2}\;\sigma\;\nabla V(x_t) + \sigma \; b((x)_{t-t_0}^t)\right)dt + \sigma \; dB_t
\end{equation}
where $V$, $\sigma$ and $B$ are as previously defined and $b:\mathcal{C}^0([-t_0,0],\mathbb{R}^d)\rightarrow\mathbb{R}^d$ is a bounded measurable function.\\

Let $Q$ be the law of $x$. For given numbers $a$ and $b$, and for an interval $I$, $Q_I^{a,b}$ shall be the law of the stochastic bridge associated with $x$ such that $y(\inf I)=a$ and $y(\sup I)=b$.

\begin{rem}
We only require $b$ to be bounded and measurable, without any further assumption on its regularity. This is one of the main advantages of our method.\\
We give here a few examples (that should be renormalized to be small enough to satisfy the hypotheses of our main result, Theorem \ref{th1}) :
\begin{itemize}
\item we can consider $b$ of the form $b((u)_{t-t_0}^t)=\int_{t-t_0}^t f(u_t,u_s)\; ds$ for any trajectory $u\in\Omega$ with $f$ bounded and measurable ; for instance, $f(x,y)=\text{sign} (x-y)$ or $f(x,y)=\mathds{1}_{y\in A}$ with $A$ a subset of $\mathbb{R}^d$ (thus obtaining an occupation time);
\item we may wish to have a dependence in the past depending on a single time, of the form $b((u)_{t-t_0}^t)=g(u_{t-t_0})$ for a certain function $g$ measurable and bounded, but not necessarily continuous ;
\item staying with non continuous perturbation drifts, one can also consider a function with jumps, such as $b((u)_{t-t_0}^t)=\mathds{1}_{(u)_{t-t_0}^t \in A}$ with $A$ a subset of $\mathcal{C}^0([-t_0,0],\mathbb{R}^d)$.\\
\end{itemize}
\end{rem}

Using Girsanov theorem (taken for instance from \cite{LS}), we can show that the restriction to some finite time interval $I$ of the law of $x$ is absolutely continuous with respect to the restricted law of the reference process $y$, and that its density is of the form $\exp(-H_{I}(u))du$ where the associated Hamiltonian $H_I$ is defined by

\begin{equation}
H_I(u)= - \int_I b((u)_{t-t_0}^t)^*\;dB_t + \frac{1}{2} \int_I |b((u)_{t-t_0}^t)|^2\; dt
\end{equation}

for every trajectory $u$ in the space state $\Omega$. We will denote $H_N=H_{I(N)}$.\\

Our goal is to prove the convergence of the sequence of measures $(Q_N)$, defined on $\Omega$ by \begin{equation}Q_N(du)= \exp(-H_N(u^{(N)}))P(du)\end{equation} towards a  weak solution of the equation (\ref{2}) that will be time stationary. From this point, classical results of Gibbs theory shall provide further results, such as the exponential decorrelation.\\

To this end, we introduce the partition function $Z_N$ given by \begin{equation}
Z_N=\mathbb{E}_{\mu}[\exp(-H_N)]
\end{equation}

with $\mathbb{P}_{\mu}$ being the law under $\mu$ and $\mathbb{E}_{\mu}$ the associated expectation.\\

The aim of our next section will be to write $Z_N$ under the form of a cluster expansion. What this means shall be detailed next.\\

\section{The cluster representation of the partition function}

In this paragraph, we aim to expand the partition function into a ``cluster expansion", that is to obtain an expression of $Z_N$ of the form :
$$Z_N=1+ \sum_{\tau}\prod_i \Gamma_{\tau_i}$$ with the meaning and nature of each of $\tau$, $i$ and $\Gamma_{\tau_i}$ to be determined.\\

A usual, we write $P(X= \; . \;|X'=Z)$ for the conditional distribution of $X$ given $X'=Z$, i.e. a regular desintegration of the $P$-law of $(X,X')$ with respect to the law of $X'$.\\

We start by conditioning with respect to the values taken at times $-Na$, ..., $Na$ by the process $(y_t)_t$:\\

$\begin{array}{lll}
Z_N & = & \int_{\Omega} \exp(-H_N(u^{(N)}))P(du) \\

& = & \int _{\mathbb{R}^{(2N+1)d}}\int_{\Omega} \exp(-H_N(u^{(N)}))P(du|u(ja)=y_j, j=-N,...,N) \\

&  & \quad  \quad  \quad  \quad  \quad  \quad  \quad  \quad  \bigotimes_{j=-N}^{N-1} P(y((j+1)a)=dy_{j+1}|y(ja)=y_j)\otimes P(y(-Na)=dy_{-N})\\
\end{array}$

Thus, first using the Markov property of $y$ and the definition of the bridges $Q_I^{a,b}$, and, second, introducing the transition densities $p(t,.,.)$ of $(y_t)_t$ with respect to $\mu$,\\

$\begin{array}{lll}
Z_N & = & \int _{\mathbb{R}^{(2N+1)d}}\int_{\Omega} \exp(-H_N(u^{(N)})\bigotimes_{j=-N}^{N-1} Q_{I_j}^{y_j, y_{j+1}}(du)\left(\bigotimes_{j=-N}^{N-1}P(y(a)=dy_{j+1}|y(0)=y_j)\right)\otimes \mu(dy_{-N})\\

& = & \int _{\mathbb{R}^{(2N+1)d}}\int_{\Omega} \exp(-H_N(u^{(N)}))\bigotimes_{j=-N}^{N-1} Q_{I_j}^{y_j, y_{j+1}}(du)\left(\bigotimes_{j=-N}^{N-1}p(a,y_j,y_{j+1})\mu(dy_{j+1})\right)\otimes \mu(dy_{-N})\\
\end{array}$\\

Next, we re-arrange the terms in a convenient way :

$$Z_N= \int _{\mathbb{R}^{(2N+1)d}}\int_{\Omega} \prod_{j=-N}^{N-1}\exp(-H_{I_j}(u^{(N)}))p(a,y_j,y_{j+1})\bigotimes_{j=-N}^{N-1} Q_{I_j}^{y_j, y_{j+1}}(du)\bigotimes_{j=-N}^{N}\mu(dy_{j})$$

Contrary to what was done, by mistake, between equations (13) and (14) in \cite{DR}, we cannot exchange the product and the integral over $\Omega$. This can be corrected in a way by a different decomposition : we write :
\begin{equation} Z_N=\int _{\mathbb{R}^{(2N+1)d}}\int_{\Omega} \prod_{j=-N}^{N-1}\alpha_j(a,y,u)\bigotimes_{j=-N}^{N-1} Q_{I_j}^{y_j, y_{j+1}}(du)\bigotimes_{j=-N}^{N}\mu(dy_{j})
\end{equation}
$\begin{array}{llll}
\text{  where  }\alpha_j(a,y,u)& = & \exp(-H_{I_{-N}}(u^{(N)}))\sqrt{p(a,y_{-N},y_{-N+1})} &  \text{if }j=-N\\
& = & \exp(-H_{I_j}(u^{(N)}))\sqrt{p(a,y_{j-1},y_{j})p(a,y_j,y_{j+1})} &  \text{if }j\in\{-N+1,...,N-2\}\\
& = & \exp(-H_{I_{N-1}}(u^{(N)}))\sqrt{p(a,y_{N-2},y_{N-1})}p(a,y_{N-1},y_N) & \text{if }j=N-1
\end{array}$

\vspace{1\baselineskip}

In order to obtain a sum of a product of terms that are ``temporally independent" from each other, we rewrite differently the product of the $\alpha_j$ :
\begin{center}
$\begin{array}{lll}
\prod_{j=-N}^{N-1}\alpha_j(a,y,u) & = & \prod_{j=-N}^{N-1}(1+\alpha_j(a,y,u)-1) \\

& = & 1 + \sum_S \prod_{j\in S}(\alpha_j(a,y,u)-1)\end{array}$
\end{center}

where the sum is taken on all non-empty subsets $S$ of $\{-N,...N-1\}$.\\

Thus,  $$\prod_{j=-N}^{N-1}\alpha_j(a,y,u)= 1 + \sum_{p\in\mathbb{N^*}}\sum_{\;\tau_1\sqcup...\sqcup \tau_p} \; \prod_{i=1}^p \; \prod_{j\in\tau_i}\;(\alpha_j(a,y,u)-1)$$

with $\tau_i$ of the form $\{b,b+1,...,b+r\}$, $b, b+r \in I(N)$, and $d(\tau_i,\tau_j)\geqslant 2$\\

Coming back to the expression of the partition function,\\

$\begin{array}{lll}
Z_N & = & \int _{\mathbb{R}^{(2N+1)d}}\int_{\Omega}\left( 1 + \sum_{p\in\mathbb{N^*}}\sum_{\;\tau_1\sqcup...\sqcup \tau_p \subset I(N)} \; \prod_{i=1}^p \; \prod_{j\in\tau_i}\;(\alpha_j(a,y,u)-1)\right)\bigotimes_{j=-N}^{N-1} Q_{I_j}^{y_j, y_{j+1}}(du)\bigotimes_{j=-N}^{N}\mu(dy_{j})\\

& = & 1 + \sum_{p\in\mathbb{N^*}}\sum_{\;\tau_1\sqcup...\sqcup \tau_p \subset I(N)}\int _{\mathbb{R}^{(2N+1)d}}\int_{\Omega}  \prod_{i=1}^p \; \prod_{j\in\tau_i}\;(\alpha_j(a,y,u)-1)\bigotimes_{j=-N}^{N-1} Q_{I_j}^{y_j, y_{j+1}}(du)\bigotimes_{j=-N}^{N}\mu(dy_{j})
\end{array}$\\

\vspace{1\baselineskip}

The decomposition of the product of the $\alpha_j$ was done with in mind the idea of inverting the product for $i$ from $1$ to $p$ and both integrals. This is indeed now possible :\\
\begin{itemize}
\item Notice that $j_1\in\tau_{i_1}$ and $j_2\in\tau_{i_2}$ implies that $|j_1-j_2|\geqslant 2$.

Take $\tau_i=\{b_i, ..., b_i+r_i\}$.\\

As $\prod_{j\in\tau_i}\;(\alpha_j(a,y,u)-1)$ only depends on $u(t)$ for
$$t\in \bigcup_{j\in\tau_i}[ja-t_0, (j+1)a]=[(b_i a-t_0))\wedge (-Na), (b_i +r_i +1)a] \subset  I_{b_i-1}\cup...\cup I_{b_i+r_i}$$
and $(b_{i_1}+r_{i_1}+1)a<b_{i_2}a-t_0$ for $i_1, i_2\in\{1,...,p\}$, $i_1\neq i_2$ and $a$ large enough, we have  $$\left(I_{b_{i_1}-1}\cup...\cup I_{b_{i_1}+r_{i_1}}\right)\bigcap\left(I_{b_{i_2}-1}\cup...\cup I_{b_{i_2}+r_{i_2}}\right)=\emptyset.$$

This allows us to invert the product of the $\alpha_j$ with the integral over $\Omega$ : we thus have 
\begin{equation*}
Z_N=1 + \sum_{p\in\mathbb{N^*}}\sum_{\;\tau_1\sqcup...\sqcup \tau_p} \int _{\mathbb{R}^{(2N+1)d}} \prod_{i=1}^p \left[\int_{\Omega}  \prod_{j\in\tau_i}\;(\alpha_j(a,y,u)-1)\bigotimes_{k=b_i-1}^{b_i+r_i-1} Q_{I_k}^{y_k, y_{k+1}}(du)\right]\bigotimes_{j=-N}^{N}\mu(dy_{j})
\end{equation*}

\item Moreover, notice that the expression between the square brackets only depends on $y_{b_i-1},y_{b_i},..., y_{b_i+r_i}$. As a consequence, we can interchange the integral over $\mathbb{R}^{2N+1}$ and the product in $i$.
\end{itemize}

Thus, we obtain the following cluster representation of the partition function $Z_N$ :
\begin{equation}\label{cr}
Z_N=1 + \sum_{p\in\mathbb{N^*}}\sum_{\;\tau_1\sqcup...\sqcup \tau_p \subset I(N)}\;\prod_{i=1}^p \; \Gamma_{\tau_i}
\end{equation}

where  \begin{equation}
\Gamma_{\tau}=\int _{\mathbb{R}^{(|\tau|+1)d}} \int_{\Omega}  \prod_{j\in\tau}\;(\alpha_j(a,y,u)-1)\bigotimes_{k=\min(\tau)-1}^{\max(\tau)-1} W_{I_k}^{y_k, y_{k+1}}\bigotimes_{l=\min(\tau)-1}^{\max(\tau)}\mu(dy_{l})
\end{equation}

\vspace{1\baselineskip}

and the clusters $\tau_i$ are such that :
\begin{itemize}
\item $\tau_i \subset I(N)$;
\item $j_1,j_2 \in\tau_i\; \& \; j_3 \in [j_1,j_2] \Rightarrow j_3 \in\tau_i$ (connected sets) ;
\item $j_1 \in\tau_{i_1} \; \& \; j_2 \in\tau_{i_2} \Rightarrow |j_1-j_2| \geqslant 2$ (disjoint sets).
\end{itemize}

\vspace{1\baselineskip}

\section{Study of the cluster estimates}

Having obtained the quantities $\Gamma_{\tau}$, we now wish to control them.\\

More specifically, we will show that, when the perturbation term $b$ is sufficiently small, there exists a positive function $\delta(a)$, which goes to $0$ when $a$ goes to $+\infty$, such that for $a$ large enough,
\begin{equation}\label{8}
|\Gamma_{\tau}|\leqslant\delta(a)^{|\tau|}
\end{equation}
 where $|\tau|$ is the cardinal of $\tau$.

\subsection{A generalized Hölder inequality}

In order to estimate this coefficient $\Gamma_{\tau}$, we will need to commute in some way the integrals and the remaining product (over the elements of $\tau$). The following lemma, taken from \cite{MVZ}, will be of a great help in this regard.

\begin{lem}\label{3}
Let $(\mu_x)_{x\in \mathcal{X}}$ be a family of probability measures, each one defined on a space $E_x$, where the elements $x$ belong to some finite set $\mathcal{X}$. Let us also define a finite family $(f_i)_i$ of functions on $E_{\mathcal{X}}=\times_{x\in\mathcal{X}}E_x$ such that each $f_i$ is $\mathcal{X}_i$-local for a certain $\mathcal{X}_i\subset\mathcal{X}$, in the sense that $$f_i(e)=f_i(e_{|\mathcal{X}_i}), \text{ for } e=(e_x)_{x\in \mathcal{X}}\in E_{\mathcal{X}}. $$

Let $\rho_i>0$ be numbers satisfying the following conditions : $$\forall x\in\mathcal{X}, \sum_{\mathcal{X}_i\ni x}\frac{1}{\rho_i}\leqslant 1.$$

Then $$\left| \int_{E_{\mathcal{X}}}\prod_i f_i \otimes_{x\in\mathcal{X}}d\mu_x \right| \leqslant \prod_i \left(\int_{E_{\mathcal{X}_i}} |f_i|^{\rho_i}\otimes_{x\in\mathcal{X}_i}d\mu_x\right)^{1/\rho_i}$$

\end{lem}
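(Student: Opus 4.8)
The plan is to prove this generalized H\"older inequality by induction on the cardinality of the index set $\mathcal{X}$, reducing at each step to the classical two-function H\"older inequality on a single coordinate space.

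First I would set up the base case $|\mathcal{X}|=1$: here every $f_i$ depends on the single variable $x$, and the hypothesis reads $\sum_i 1/\rho_i \leqslant 1$, so the statement is precisely the classical (finite-family) H\"older inequality on $(E_x,\mu_x)$, which follows from the two-function version by iteration. For the inductive step, pick a distinguished element $x_0\in\mathcal{X}$ and write $E_{\mathcal{X}} = E_{x_0}\times E_{\mathcal{X}'}$ with $\mathcal{X}'=\mathcal{X}\setminus\{x_0\}$, so that $\otimes_{x\in\mathcal{X}}d\mu_x = d\mu_{x_0}\otimes(\otimes_{x\in\mathcal{X}'}d\mu_x)$ by Fubini. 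Split the family $(f_i)$ into those $f_i$ with $x_0\in\mathcal{X}_i$ (call this set of indices $A$) and those with $x_0\notin\mathcal{X}_i$ (the set $B$); the latter are constant in $e_{x_0}$. The idea is to integrate out $e_{x_0}$ first, applying the classical H\"older inequality in the variable $e_{x_0}\in E_{x_0}$ to the family $(f_i(\cdot,e_{\mathcal{X}'}))_{i\in A}$ together with one extra "slot" for the product $\prod_{i\in B}f_i$, using the exponents $\rho_i$ for $i\in A$ and a residual exponent $\rho_0$ defined by $1/\rho_0 = 1 - \sum_{i\in A}1/\rho_i \geqslant 0$ (which is $\geqslant 0$ exactly by the hypothesis at $x=x_0$). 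If $1/\rho_0 = 0$ one treats that slot as an $L^\infty$ factor pulled out of the integral; otherwise one gets, pointwise in $e_{\mathcal{X}'}$,
\begin{equation*}
\int_{E_{x_0}}\Bigl|\prod_{i\in A}f_i\Bigr|\,\Bigl|\prod_{i\in B}f_i\Bigr|\,d\mu_{x_0} \leqslant \Bigl(\prod_{i\in A}\Bigl(\int_{E_{x_0}}|f_i|^{\rho_i}d\mu_{x_0}\Bigr)^{1/\rho_i}\Bigr)\cdot\Bigl|\prod_{i\in B}f_i\Bigr|.
\end{equation*}

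Then I would integrate the resulting bound over $E_{\mathcal{X}'}$. Introduce the new functions $g_i := \bigl(\int_{E_{x_0}}|f_i|^{\rho_i}d\mu_{x_0}\bigr)^{1/\rho_i}$ for $i\in A$, which are $\mathcal{X}_i\setminus\{x_0\}$-local on $E_{\mathcal{X}'}$, and keep the $f_i$ for $i\in B$, which are already $\mathcal{X}_i$-local on $E_{\mathcal{X}'}$. The exponent condition needed to apply the inductive hypothesis on $E_{\mathcal{X}'}$ is, for each $x\in\mathcal{X}'$, that $\sum 1/\rho_i \leqslant 1$ over all the new functions whose (new) locality set contains $x$; but this sum is a subsum of the original $\sum_{\mathcal{X}_i\ni x}1/\rho_i \leqslant 1$, so it holds. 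Applying the induction hypothesis and then unwinding the definition of $g_i$ (so that $\int_{E_{\mathcal{X}'}}|g_i|^{\rho_i} = \int_{E_{\mathcal{X}_i}}|f_i|^{\rho_i}$ for $i\in A$ and $\int_{E_{\mathcal{X}'}}|f_i|^{\rho_i} = \int_{E_{\mathcal{X}_i}}|f_i|^{\rho_i}$ for $i\in B$) yields the claimed product bound, completing the induction.

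The main obstacle I anticipate is purely bookkeeping rather than conceptual: one must be careful about the degenerate cases where some $1/\rho_0 = 0$ (or more generally where the residual exponent vanishes and a factor must be handled as an essential-supremum bound outside the integral), and about making sure the "extra slot" carrying $\prod_{i\in B}f_i$ is legitimately used in the classical H\"older step without a spurious exponent constraint — this is why one needs $1/\rho_0 \geqslant 0$ rather than $>0$. A secondary point of care is checking that the re-indexed locality sets $\mathcal{X}_i\setminus\{x_0\}$ still satisfy the hypothesis on $E_{\mathcal{X}'}$, which as noted follows immediately since we only ever remove an index from the sums. No measurability subtleties arise because $\mathcal{X}$ is finite and all measures are probability measures.
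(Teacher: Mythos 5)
Your induction on $|\mathcal{X}|$ is a correct and self-contained proof of the lemma. The one substantive point to check — that the exponent hypothesis $\sum_{\mathcal{X}_i\ni x}1/\rho_i\leqslant 1$ survives the passage to the reduced locality sets $\mathcal{X}_i\setminus\{x_0\}$ on $\mathcal{X}'=\mathcal{X}\setminus\{x_0\}$ — you verify correctly (each new sum is a subsum of an old one), and the reconstitution $\int_{E_{\mathcal{X}_i\setminus\{x_0\}}}g_i^{\rho_i}=\int_{E_{\mathcal{X}_i}}|f_i|^{\rho_i}$ by Fubini is right, as is the treatment of the residual exponent $\rho_0$ with the convention that $1/\rho_0=0$ corresponds to an $L^\infty$ slot.

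For comparison: the paper does not prove this lemma at all; it is quoted verbatim from the reference \cite{MVZ} with no argument supplied, so there is nothing in the paper's text to match your proof against. Your coordinate-at-a-time Fubini/H\"older induction is the standard way to establish such product-structure multilinear H\"older inequalities, and it fills in exactly what the paper delegates to the citation. One cosmetic simplification: since $\prod_{i\in B}f_i$ is constant in $e_{x_0}$, you need not assign it a H\"older slot with exponent $\rho_0$ — just pull it outside the inner integral over $E_{x_0}$ and apply classical H\"older to $\prod_{i\in A}|f_i|$ alone, using the residual slot $\rho_0$ for the constant function $1$ (whose $L^{\rho_0}(\mu_{x_0})$-norm is $1$ because $\mu_{x_0}$ is a probability measure); your version gives the identical bound, since $\bigl(\int|\prod_{i\in B}f_i|^{\rho_0}d\mu_{x_0}\bigr)^{1/\rho_0}=|\prod_{i\in B}f_i|$ for a constant, but the degenerate case $1/\rho_0=0$ then needs no special mention.
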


\vspace{1\baselineskip}

We apply lemma \ref{3} twice consecutively, first with respect to the integral over $\Omega$, then with respect to the integral over $\mathbb{R}^{(|\tau|+1)d}$. We write $\tau=\{b, ..., b+r \}$.
\begin{itemize}
\item Set $I_{\tau}(y)=\int_{\Omega} \; \prod_{j=b}^{b+r}\;(\alpha_j(a,y,u)-1)\bigotimes_{k=b-1}^{b+r-1} Q_{I_k}^{y_k, y_{k+1}}(du)$.\\

Taking $\mathcal{X}=\{b-1, ...,b+r-1\}$, $\mathcal{X}_i=\{i-1,i\}$, $E_{\mathcal{X}}=\Omega$, $E_x=\mathcal{C}^{°}(I_x,\mathbb{R^d})$ and $d\mu_x=Q_{I_x}^{y_x,y_{x+1}}$, for $(\rho_j)_{j\in\tau}$ such that $\rho_j>1$ and $\frac{1}{\rho_j}+\frac{1}{\rho_{j+1}}\leqslant 1 $, by lemma \ref{3},
$$I_{\tau}(y)\leqslant \prod_{j=b}^{b+r} g_j^{1/\rho_j}$$ where $g_j=\int_{\Omega}  |\alpha_j(a,y,u)-1|^{\rho_j}\;Q_{I_{j-1}}^{y_{j-1}, y_{j}}(du)\;Q_{I_j}^{y_j, y_{j+1}}(du)$.\\

\item Let $\overset{\sim}{\Gamma_{\tau}}=\int _{\mathbb{R}^{(r+2)d}} \; \prod_{j=b}^{b+r}\;g_j^{1/\rho_j}\bigotimes_{l=\b-1}^{b+r}\mu(dy_{l})$.\\

Choosing this time  $\mathcal{X}=\{b-1, ...,b+r\}$, $\mathcal{X}_i=\{i-1,i,i+1\}$, $E_{\mathcal{X}}=\mathbb{R}^{(r+2)d}$, $E_x=\mathbb{R}^d$ and $d\mu_x=\mu(y_x)$, for $(\gamma_j)_{j\in\{-N,...,N\}}$ such that $\gamma_j>1$ and $\frac{1}{\gamma_{j-1}}+\frac{1}{\gamma_j}+\frac{1}{\gamma_{j+1}}\leqslant 1 $, lemma \ref{3} ensures that $$ \overset{\sim}{\Gamma_{\tau}} \; \leqslant \; \prod_{j=b}^{b+r} \left( \int_{\mathbb{R}^{3d}}\; |g_j|^{\gamma_j/\rho_j}\;\mu(dy_{j-1})\;\mu(dy_{j})\;\mu(dy_{j+1})\right)^{1/\gamma_j}.$$
\end{itemize}

For every $i\in\tau$, every $j\in\{-N,...,N\}$, we take $\rho_i=\gamma_j=4$.\\

This leads us to the following inequality :
\begin{equation}
\Gamma_{\tau} \; \leqslant \;\prod_{j\in\tau} \; \left[ \int_{\mathbb{R}^{3d}}\; \int_{\Omega}\;  (\alpha_j(a,y,u)-1)^{4}\;Q_{I_{j-1}}^{y_{j-1}, y_{j}}(du)\;Q_{I_j}^{y_j, y_{j+1}}(du)\;\mu(dy_{j-1})\;\mu(dy_{j})\;\mu(dy_{j+1})\right]^{1/4}
\end{equation}

\vspace{1\baselineskip}

We now set, for every $j\in\tau$,  $$A_j(a)=\int_{\mathbb{R}^{3d}}\; \int_{\Omega}\;  (\alpha_j(a,y,u)-1)^{4}\;Q_{I_{j-1}}^{y_{j-1}, y_{j}}(du)\;Q_{I_j}^{y_j, y_{j+1}}(du)\;\mu(dy_{j-1})\;\mu(dy_{j})\;\mu(dy_{j+1}).$$

We wish to control this quantity and prove that it goes to $0$, uniformly in $j$, for a large enough $a$.\\

\subsection{Decomposition of $A_j(a)$}

Using that $$xy-1=(x-1)y+(y-1)\quad \text{ and that }\quad (xy-1)^4\leqslant 8((x-1)^4y^4+(y-1)^4)$$ for non-negative $x$ and $y$, and coming back to the expression of the $\alpha_j$, we can decompose $A_j(a)$ in two parts that will be dealt with separately :
\begin{equation}A_j(a)\leqslant 8 B_j(a) +8 C_j(a)\end{equation}

\vspace{1\baselineskip}

\begin{itemize}
\item if $j\in\{-N+1,...,N-2\}$, 
\begin{multline*}B_j(a) :=  \int_{\mathbb{R}^{3d}}\; \int_{\Omega}\;  (e^{-H_{I_j}(u^{(N)})}-1)^4\;p(a,y_{j-1},y_{j})^2\;p(a,y_j,y_{j+1})^2 \;\\
\quad \quad \quad \quad \quad \quad \quad \quad \quad \; W_{I_{j-1}}^{y_{j-1}, y_{j}}(du)\;W_{I_j}^{y_j, y_{j+1}}(du)\;\mu(dy_{j-1})\;\mu(dy_{j})\;\mu(dy_{j+1})\\
= \int_{\Omega}\;  (e^{-H_{I_j}(u^{(N)})}-1)^4 \; p(a,u((j-1)a),u(ja)) \;p(a,u(ja),u((j+1)a))P(du) 
\end{multline*}

\begin{multline*}
C_j(a)  :=  \int_{\mathbb{R}^{3d}}\int_{\Omega} \left(\sqrt{p(a,y_{j-1},y_{j})p(a,y_j,y_{j+1})}-1\right)^4W_{I_{j-1}}^{y_{j-1}, y_{j}}(du)W_{I_j}^{y_j, y_{j+1}}(du)\mu(dy_{j-1})\mu(dy_{j})\mu(dy_{j+1}) \\
=  \int_{\mathbb{R}^{3d}}\; \left(\sqrt{p(a,x,y)p(a,y,z)}-1\right)^4\mu(dx)\;\mu(dy)\;\mu(dz)
\end{multline*}

\item if $j=-N$,$$B_{-N}(a):= \int_{\Omega}\;  \left(e^{-H_{I_{-N}}(u^{(N)})}-1\right)^4 \; p(a,u(-Na),u((-N+1)a)) \;P(du)$$

$$C_{-N}(a) :=\int_{\mathbb{R}^{2d}}\; \left(\sqrt{p(a,x,y)}-1\right)^4\mu(dx)\;\mu(dy)$$\\

\item if $j=N-1$,$$B_{N-1}(a):= \int_{\Omega}\;  \left(e^{-H_{I_{N-1}}(u^{(N)})}-1\right)^4 \; p(a,u((N-2)a),u((N-1)a)) \;p(a,u((N-1)a),u(Na))^2 \;P(du)$$

$$C_{N-1}(a) :=\int_{\mathbb{R}^{3d}}\; \left(\sqrt{p(a,x,y)}p(a,y,z)-1\right)^4\mu(dx)\;\mu(dy)\;\mu(dz)$$
\end{itemize}

We will now study separately the $B_j$ and the $C_j$, without omitting the two boundary cases, especially the one when $j=N-1$, which will turn out to be the most troublesome.\\

\subsection{Study of $B_j(a)$}\label{study_bj}

\subsubsection{When $j\in\{-N+1,...,N-2\}$}

We will focus our attention on the case $j\in\{-N+1,...,N-2\}$.

Using Cauchy-Schwarz's inequality, we again decompose the integral in two parts : \\

$\begin{array}{lll}
B_j(a) & = & \int_{\Omega}\;  (e^{-H_{I_j}(u^{(N)})}-1)^4 \; p(a,u((j-1)a),u(ja)) \;p(a,u(ja),u((j+1)a))P(du) \\

& \leqslant & \left( \int_{\Omega}\;  (e^{-H_{I_j}(u^{(N)})}-1)^8 \; P(du) \right)^{1/2}\left( \int_{\Omega}\;   p(a,u((j-1)a),u(ja))^2 \;p(a,u(ja),u((j+1)a))^2P(du) \right) ^{1/2} \\
& = & K_j(a)  \left( \int_{\Omega}\;  (e^{-H_{I_j}(u^{(N)})}-1)^8 \; P(du) \right) ^{1/2}
\end{array}$\\

where $K_j(a)$ is bounded uniformly in $a$. Indeed,\\

$\begin{array}{lll}
K_j(a)^4 &  := & \left(\int_{\Omega}\;   p(a,u((j-1)a),u(ja))^2 \;p(a,u(ja),u((j+1)a))^2 \;P(du) \right)^2\\
& \leqslant &  \int_{\Omega}\;   p(a,u((j-1)a),u(ja))^4 \;P(du)\; \int_{\Omega}\; p(a,u(ja),u((j+1)a))^4 \;P(du) \\
& = & \mathbb{E}\left[p(a,y((j-1)a), y(ja))^4]  \mathbb{E}[p(a,y(ja), y((j+1)a))^4\right]\\
& = & \left(\int_{\mathbb{R}^{2d}}p(a,x,y)^4 \; p(a,x,y)\;\mu(dx) \;\mu(dy)\right)^2\\
& = & \|p(a,.,.)\|_{\mathcal{L}^5(\mu\otimes\mu)}^{10}
\end{array}$\\

\vspace{1\baselineskip}

The main goal of this subsection is to find an upper bound for $\overset{\sim}{B}_j(a)=\int_{\Omega}\;  (e^{-H_{I_j}(u^{(N)})}-1)^8 \; P(du)$ depending on $a$ going to $0$ as soon as $a$ goes to infinity.\\

What follows is a direct adaptation of what was done in \cite{RR} and \cite{DR}.\\

We start by noticing that for every $x\in\mathbb{R}$, $$(e^{-x}-1)^8=x^8\left(\int_0^1 e^{-tx}\;dt\right)^8 = x^8 \; \int_{[0,1]^8}e^{-(t_1+...+t_8)x}\;dt_1 ... dt_8 $$

and thus \begin{equation} \overset{\sim}{B}_j(a)=\int_{[0,1]^8} \int_{\Omega}\;  H_{I_j}(u^{(N)})^8e^{-(t_1+...+t_8)H_{I_j}(u^{(N)})}P(du)\;dt_1 ... dt_8
\end{equation}\\

Set $L(z)=\int_{\Omega}\; e^{-zH_{I_j}(u^{(N)})}P(du)$.\\

Then, $L$ is an holomorphic function, and its eighth derivative is $$\frac{\partial}{\partial z^8}L(z)=\int_{\Omega}\;  H_{I_j}(u^{(N)})^8e^{-zH_{I_j}(u^{(N)})}P(du)$$ which means we can rewrite $\overset{\sim}{B}_j$ as
\begin{equation}\label{4}
\overset{\sim}{B}_j(a)=\int_{[0,1]^8}\frac{\partial}{\partial z^8}L(z)\left|_{z=t_1+...+t_8} \right. \;dt_1 ... dt_8
\end{equation}

On another side, we can obtain an alternative expression of $L$, using Cauchy-Schwarz's inequality and the martingale property of $\exp(-H)$ :

\begin{multline*}
L(z)= \int_{\Omega} \; \exp \left(z\int_{I_j} b((u)_{t-t_0}^t)^*\;dB_t - \frac{z}{2} \int_{I_j} |b((u)_{t-t_0}^t)|^2 \; dt \right) \; P(du)  \\
= \int_{\Omega} \; \exp\left(z\int_{ja}^{(j+1)a} \; b((u)_{t-t_0}^t)^*\;dB_t-z^2\int_{ja}^{(j+1)a} \;|b((u)_{t-t_0}^t)|^2 \; dt\right) \\
\times \; \exp\left(\frac{2z^2-z}{2}\int_{ja}^{(j+1)a} \; |b((u)_{t-t_0}^t)|^2 \; dt\right)\; P(du) \\
\leqslant \left( \int_{\Omega} \; \exp\left(2z\int_{ja}^{(j+1)a} \; b((u)_{t-t_0}^t)^*\; dB_t-2z^2\int_{ja}^{(j+1)a} \; |b((u)_{t-t_0}^t)|^2 \; dt\right)P(du) \right)^{1/2} \\
\times \left( \int_{\Omega} \; \exp\left(z(2z-1)\int_{ja}^{(j+1)a} \; |b((u)_{t-t_0}^t)|^2 \; dt\right) P(du)\right)^{1/2}\\
= \left( \int_{\Omega} \; \exp\left(z(2z-1)\int_{ja}^{(j+1)a} \; |b((u)_{t-t_0}^t)|^2 \; dt\right) P(du)\right)^{1/2}
\end{multline*}

Then, we apply Cauchy's inequality to $L$, for $\rho$ such that $L$ is well defined on $B(z,\rho)$ : \begin{equation}\label{5}
\left| \frac{\partial}{\partial z^8}L(z)\right|\leqslant \frac{8!}{\rho^8} \; \sup_{v\in B(z,\rho)}|L(v)|
\end{equation}

Thanks to the above expression of $L$, $$|L(v)|^2 \leqslant \int_{\Omega} \; \exp\left(\mathfrak{Re}(v(2v-1))\int_{ja}^{(j+1)a} \; |b((u)_{t-t_0}^t)|^2 \; dt\right) P(du)$$

Furthermore, $\mathfrak{Re}(v(2v-1))=2\mathfrak{Re}(v)^2-2Im(v)^2-\mathfrak{Re}(v)\leqslant \mathfrak{Re}(v)(2\mathfrak{Re}(v)-1)$ and as $|v-z|^2=\rho^2$, we have $(\mathfrak{Re}(v)-z)\leqslant \rho^2$ for $z=t_1+...+t_8$, it follows that $\mathfrak{Re}(v)\in \{x\in\mathbb{R} : |z-x|<\rho \}$, hence $\mathfrak{Re}(v)\leqslant z+\rho$, which implies $\mathfrak{Re}(v(2v-1))\leqslant (z+\rho)(2z+2\rho-1) \leqslant 2(z+\rho)^2$.\\

Subsequently, $|L(v)|^2 \leqslant \int_{\Omega} \; \exp\left(2(\rho+z)^2)a \|b\|_{\infty}^2\right) P(du)$,  and thus,
\begin{equation}\label{6}
\sup_{v\in\mathcal{C}(z,\rho)}|L(v)| \leqslant \exp\left(a\|b\|_{\infty}^2(\rho+z)^2)\right)
\end{equation}

\vspace{1\baselineskip}

Combining (\ref{4}), (\ref{5}) and (\ref{6}),
$$\overset{\sim}{B}_j(a)\leqslant \int_{[0,1]^8} \frac{8!}{\rho^8}\exp\left(a\|b\|_{\infty}^2(\rho+t_1+...+t_8)^2\right)\; dt_1...dt_8 \leqslant \frac{8!}{\rho^8}\exp\left(a\|b\|_{\infty}^2(\rho+8)^2)\right)$$

Thus, for every $\rho\geqslant 8$, \begin{equation}\label{7}
\overset{\sim}{B}_j(a)\leqslant\frac{8!}{\rho^8}e^{4a\|b\|_{\infty}^2\rho^2}
\end{equation}

We want to determine which $\rho\geqslant8$ will minimize the right hand side of this last inequality.\\

Let $f$ be the function given by $f(\rho)=\frac{8!}{\rho^8}e^{4a\|b\|_{\infty}^2\rho^2}$. Then, $f^{'}(\rho)=\left(-\frac{8}{\rho}+8a\|b\|_{\infty}^2\rho\right)f(\rho)$.\\

Thus, $f^{'}(\rho)=0$ if and only if $\rho^2=\frac{1}{a\|b\|_{\infty}^2}$, which is larger than $8$ if and only if  \begin{equation}\label{c}
a\|b\|_{\infty}^2\leqslant \frac{1}{8},
\end{equation} and the optimal inequality for (\ref{7}) is \begin{equation}\label{9}
\overset{\sim}{B}_j(a)\leqslant8!e^4(a\|b\|_{\infty}^2)^4
\end{equation}

Finally coming back to the expression of $B_j$, we have obtained, under condition (\ref{c}),
\begin{equation}\label{17}
B_j(a)\leqslant \sqrt{8!}e^2\;\|p(a,.,.)\|_{\mathcal{L}^5(\mu\otimes\mu)}^{5/2}\;(a\|b\|_{\infty}^2)^2 
\end{equation}

\subsubsection{Boundary cases}

Remember that $$B_{-N}(a)= \int_{\Omega}\;  \left(e^{-H_{I_{-N}}(u^{(N)})}-1\right)^4 \; p(a,u(-Na),u((-N+1)a)) \;P(du).$$

As in the previous case, we can write 
$$B_{-N}(a) \leqslant K_{-N}(a)  \left( \int_{\Omega}\;  (e^{-H_{I_{-N}}(u^{(N)})}-1)^8 \; P(du) \right) ^{1/2}$$

where  $K_{-N}(a)^2=\int_{\Omega}\;  p(a,u(-Na),u((-N+1)a))^2 \;P(du)$\\

This square root can be dealt with in exactly the same fashion as is done above.\\

Furthermore, 
\begin{center}
$\begin{array}{lll}
K_{-N}(a)^2 & = & \mathbb{E}[p(a,y(-Na),y((-N+1)a))^2]\\
& = & \int_{\mathbb{R}^{2d}} p(a,x,y)^2p(a,x,y)\;\mu(dx)\;\mu(dy)\\
& = & \|p(a,., .)\|^3_{\mathcal{L}^3(\mu\otimes\mu)}\\
\end{array}$
\end{center}

Hence the following result : \begin{equation}\label{118}
B_{-N}(a) \leqslant \sqrt{8!}e^2\;\|p(a,.,.)\|_{\mathcal{L}^3(\mu\otimes\mu)}^{3/2}\;(a\|b\|_{\infty}^2)^2 
\end{equation}

\vspace{1\baselineskip}

We now turn our attention to $$B_{N-1}(a)= \int_{\Omega}\;  \left(e^{-H_{I_{N-1}}(u^{(N)})}-1\right)^4 \; p(a,u((N-2)a),u((N-1)a)) \;p(a,u((N-1)a),u(Na))^2 \;P(du)$$

We proceed in a similar way to decompose $B_{N-1}(a)$ into the product of two terms and we have to study the quantity :
$$K_{N-1}(a)=\sqrt{\int_{\Omega}\;  p(a,u((N-2)a),u((N-1)a))^2 \;p(a,u((N-1)a),u(Na))^4  \;P(du)}$$

In order to obtain an upper bound a moment of $p(a,.,.)$ with respect to $\mu \otimes \mu$ smaller than 8, Cauchy-Schwarz's inequality will not suffice : we have to apply Hölder's inequality. We choose the conjugated numbers $3$ and $3/2$ :$$K_{N-1}(a)^2 \leqslant \left( \int_{\Omega}  p(a,u((N-2)a),u((N-1)a))^6P(du) \right)^{1/3}  \left(\int_{\Omega}  \;p(a,u((N-1)a),u(Na))^6 P(du)\right)^{2/3}$$

which leads to \begin{equation*}
K_{N-1}(a) \leqslant \|p(a,.,.)\|_{\mathcal{L}^7(\mu\otimes\mu)}^{7}
\end{equation*}

and subsequently to 
\begin{equation}\label{19}
B_{N-1}(a) \leqslant \sqrt{8!}e^2\; \|p(a,.,.)\|_{\mathcal{L}^7(\mu\otimes\mu)}^{7/2}\;(a\|b\|_{\infty}^2)^2 
\end{equation}

\subsubsection{An uniform bound for $B_j(a)$}

From (\ref{17}), (\ref{118}) and (\ref{19}), we deduce that, for every $j\in\{-N,..., N-1\}$, \begin{equation}\label{20}
B_{j}(a) \leqslant  \sqrt{8!}e^2\; \|p(a,.,.)\|_{\mathcal{L}^7(\mu\otimes\mu)}^{7/2}\;(a\|b\|_{\infty}^2)^2 
\end{equation}

\vspace{1\baselineskip}

\subsection{Study of $C_j(a)$}\label{study_cj}

\subsubsection{General cases}

As in the previous paragraph, we will first focus on the more general situation, when $j\in\{-N+1, ... , N-2 \}$. \\

We remind that 
$$C_j(a)  =  \int_{\mathbb{R}^{3d}}\; \left(\sqrt{p(a,x,y)p(a,y,z)}-1\right)^4\mu(dx)\;\mu(dy)\;\mu(dz)$$\\

Again,  we seek an upper bound for $C_j(a)$ which vanishes when $a$ goes to infinity.\\

It can be easily checked that for every positive $U$, \begin{equation}\label{18}
(\sqrt{1+U}-1)^4\leqslant \frac{1}{16}U^4,
\end{equation}
that for positive $x$ and $y$, \begin{equation}
xy-1=(x-1)(y-1)+(x-1)+(y-1)
\end{equation}
and that, thanks to the convexity of $u\mapsto u^4$, for any $a$, $b$ and $c$,\begin{equation}
(a+b+c)^4\leqslant 27 \; (a^4+b^4+c^4)
\end{equation}
Subsequently,

$\begin{array}{lll}
C_j(a) & \leqslant & \frac{1}{16} \int_{\mathbb{R}^{3d}}\; \left(p(a,x,y)p(a,y,z)-1\right)^4\mu(dx)\;\mu(dy)\;\mu(dz) \\
& \leqslant & \frac{1}{16} \int_{\mathbb{R}^{3d}}\;\left( (p(a,x,y)-1)(p(a,y,z)-1)+(p(a,x,y)-1)+(p(a,y,z)-1)\right)^4\mu(dx)\;\mu(dy)\;\mu(dz) \\
& \leqslant & \frac{27}{16}\int_{\mathbb{R}^{3d}}\;\left( (p(a,x,y)-1)(p(a,y,z)-1)\right)^4\mu(dx)\;\mu(dy)\;\mu(dz)+ \frac{27}{8}\int_{\mathbb{R}^{2d}}\; (p(a,x,y)-1)^4\mu(dx)\;\mu(dy)\\
& \leqslant & \frac{27}{16}\int_{\mathbb{R}^{2d}}\;(p(a,x,y)-1)^8\mu(dx)\;\mu(dy)\;+ \frac{27}{8}\int_{\mathbb{R}^{2d}}\; (p(a,x,y)-1)^4\mu(dx)\;\mu(dy)\\
\end{array}$\\

using, once more, Cauchy-Schwarz's inequality to obtain the final line.\\

Furthermore,
\begin{equation*}
\int_{\mathbb{R}^{2d}}\; (p(a,x,y)-1)^4\mu(dx)\;\mu(dy)  \leqslant \sqrt{\int_{\mathbb{R}^{2d}}\; (p(a,x,y)-1)^8\mu(dx)\;\mu(dy)}
\end{equation*}

Thus, according to lemma \ref{lem1}, 
\begin{equation}
C_j(a) \leqslant \frac{27}{16}\beta_{\delta}(a)^8 \left(\|p(\delta,., .)\|^8_{\mathcal{L}^8(\mu\otimes\mu)}\vee 1 \right) + \frac{27}{8} \beta_{\delta}(a)^4 \left(\|p(\delta,., .)\|^4_{\mathcal{L}^8(\mu\otimes\mu)}\vee 1 \right)
\end{equation}

\subsubsection{Boundary cases}
We can check that both boundary cases exhibit an analogous behaviour.\\

Indeed, on the one hand, recall that $$C_{-N}(a) =\int_{\mathbb{R}^{2d}}\; \left(\sqrt{p(a,x,y)}-1\right)^4\mu(dx)\;\mu(dy)$$

Thus, thanks to (\ref{18}) and lemma \ref{lem1},
\begin{center}
$\begin{array}{lll}
C_{-N}(a) & \leqslant & \frac{1}{16}\int_{\mathbb{R}^{2d}}\; \left(p(a,x,y)-1\right)^4\mu(dx)\;\mu(dy)\\
& \leqslant & \frac{1}{16} \sqrt{\int_{\mathbb{R}^{2d}}\; \left(p(a,x,y)-1\right)^8\mu(dx)\;\mu(dy)}\\
& \leqslant & \frac{1}{16}\beta_{\delta}(a)^4 \left(\|p(\delta,., .)\|^4_{\mathcal{L}^8(\mu\otimes\mu)}\vee 1 \right)\\
\end{array}$
\end{center}

On the other hand, $$C_{N-1}(a) =\int_{\mathbb{R}^{3d}}\; \left(\sqrt{p(a,x,y)}p(a,y,z)-1\right)^4\mu(dx)\;\mu(dy)\;\mu(dz)$$

Notice that $\quad (\sqrt{p(a,x,y)}p(a,y,z)-1)^4 \leqslant 8(p(a,y,z)-1)^4 p(a,x,y)^2+ 8(\sqrt{p(a,x,y)}-1)^4$.\\

This, with Cauchy-Schwarz's inequality and the computation of $C_{-N}(a)$ thrown in, leads to
\begin{center}
$\begin{array}{lll}
C_{N-1}(a) & \leqslant & 8\int_{\mathbb{R}^{3d}}(p(a,y,z)-1)^4 p(a,x,y)^2\mu(dx)\;\mu(dy)\;\mu(dz) \\
& & \quad \quad \quad\quad \quad \quad\quad \quad \quad + \; 8\int_{\mathbb{R}^{2d}}(\sqrt{p(a,x,y)}-1)^4\mu(dx)\;\mu(dy)\\
& \leqslant & 8 \;\sqrt{\int_{\mathbb{R}^{2d}}(p(a,x,y)-1)^8\mu(dx)\;\mu(dy)}\;\sqrt{\int_{\mathbb{R}^{2d}}p(a,x,y)^4\mu(dx)\;\mu(dy)} \; \\
& &  \quad \quad \quad\quad \quad \quad\quad \quad \quad + \;\frac{1}{2}\beta_{\delta}(a)^4 \left(\|p(\delta,., .)\|^4_{\mathcal{L}^8(\mu\otimes\mu)}\vee 1 \right) \\
& \leqslant & 8 \beta_{\delta}(a)^4 \left(\|p(\delta,., .)\|^4_{\mathcal{L}^8(\mu\otimes\mu)}\vee 1 \right) \|p(a,., .)\|^2_{\mathcal{L}^4(\mu\otimes\mu)} \\
& &  \quad \quad \quad\quad \quad \quad\quad \quad \quad + \;\frac{1}{2}\beta_{\delta}(a)^4 \left(\|p(\delta,., .)\|^4_{\mathcal{L}^8(\mu\otimes\mu)}\vee 1 \right) \\
& = & \left( 8 \|p(a,., .)\|^2_{\mathcal{L}^4(\mu\otimes\mu)} + \frac{1}{2}\right)\beta_{\delta}(a)^4 \left(\|p(\delta,., .)\|^4_{\mathcal{L}^8(\mu\otimes\mu)}\vee 1 \right)\\
\end{array}$
\end{center}

\subsubsection{Global upper bound for $C_j$}

Taking into account all three cases, when $a\geqslant 2 \delta$, for every $j\in\{-N,..., N-1\}$, 
\begin{equation}\label{21}
C_j(a)\leqslant \left(\left(\frac{27}{16}\beta_{\delta}(a)^4 +8 \right)\left(\|p(\delta,., .)\|^4_{\mathcal{L}^8(\mu\otimes\mu)}\vee 1 \right)+4\right)\beta_{\delta}(a)^4 \left(\|p(\delta,., .)\|^4_{\mathcal{L}^8(\mu\otimes\mu)}\vee 1 \right)
\end{equation}

\vspace{1\baselineskip}

To obtain the control of $|\Gamma_{\tau}|$ we are seeking, it remains to put all the pieces together and to determine its domain of validity with respect to $a$ and $b$.\\

\subsection{Back to the clusters}

Suppose that $a\geqslant2 \delta$ and that (\ref{c}) holds, i.e. $\|b\|_{\infty}\leqslant \frac{1}{\sqrt{8a}}$.

Recall that \begin{equation}\label{M}
M_{\delta}= \sup_{a\geqslant\delta}\|p(a,., .)\|_{\mathcal{L}^8(\mu\otimes\mu)}\vee 1 
\end{equation}

Thus, we can obtain bounds for $B_j$ and $C_j$ easier to deal with : according to (\ref{20}) and (\ref{21}) respectively, 

\begin{equation}\label{30}
B_{j}(a) \leqslant  \sqrt{8!}e^2\;M_{\delta}^{7/2}\;(a\|b\|_{\infty}^2)^2 
\end{equation}

\begin{equation}
C_j(a)\leqslant  M_{\delta}^4\left(4+2M_{\delta}^4\left(4+\beta_{\delta}(a)^4\right)\right)\beta_{\delta}(a)^4
\end{equation}

\vspace{1\baselineskip}

We aim to prove that, for every positive $\varepsilon$, for $a$ large enough (i.e. $a$ larger than some $a_{\varepsilon}$), there exists $b(\varepsilon)$ such that if $\|b\|_{\infty}\leqslant b(\varepsilon)$, \begin{equation}\label{cc}
|\Gamma_{\tau}| \leqslant \varepsilon ^{|\tau|}.
\end{equation}

Remember that \begin{equation*}
|\Gamma_{\tau}(a)| \leqslant \;\prod_{j\in\tau}\left(8B_j(a)+8C_j(a)\right)^{1/4},
\end{equation*}

so (\ref{cc}) will be satisfied if, for $a$ sufficiently large, both $B_j(a)$ and $C_j(a)$ are smaller than $\frac{\varepsilon^4}{16}$.\\

\begin{itemize}
\item One can check, by solving a second order inequality in $\beta_{\delta}(a)^4$, that for all $a$ such that\begin{equation}\label{cj}
\beta_{\delta}(a)^4 \leqslant \left( 2 +\frac{1}{M_{\delta}^4}\right) \left( \sqrt{1+\frac{\varepsilon^4 }{32(1+2M_{\delta}^4)^2}}-1 \right)
\end{equation}
the condition $C_j(a)\leqslant \frac{\varepsilon^4}{16}$ is true.\\


We remind that $\beta_{\delta}$ was introduced in lemma \ref{lem1} and is defined by $\beta_{\delta}(a)=2 M_{\delta} \; e^{-(a-2\delta)/C_P}$, with $C_P$ the constant associated with the Poincaré's inequality satisfied by $\mu$, according to hypothesis (H1).\\

Using this expression, and setting\begin{equation}\label{cj3}
a_C(\varepsilon) = 2 \delta - \frac{C_P}{4} \ln \left( \frac{1}{16 M_{\delta}^4}\left( 2 +\frac{1}{M_{\delta}^4}\right) \left( \sqrt{1+\frac{\varepsilon^4 }{32(1+2M_{\delta}^4)^2}}-1 \right) \right)
\end{equation}
it can be shown that (\ref{cj}) is equivalent to :
\begin{equation*}\label{cj2}
a \geqslant a_C(\varepsilon) 
\end{equation*}

Thus, for every $a\geqslant a_C (\varepsilon)$, $C_j(a)\leqslant \frac{\varepsilon^4}{16}$.\\

\begin{rem}\label{rem_a}
It can be noticed that $ a_C (\varepsilon) \geqslant 2 \delta$ if and only if $\varepsilon \leqslant 2^{5/2}M_{\delta}^2(8M_{\delta}^8+2M_{\delta}^4+1)^{1/4}$.
\end{rem}

\item From (\ref{30}), it can be seen that $B_j(a)\leqslant \frac{\varepsilon^4}{16}$ if \begin{equation}\label{33}
\|b\|_{\infty} \leqslant \frac{1}{2\sqrt{e}(8!)^{1/8}M_{\delta}^{7/8}}\;\frac{\varepsilon}{\sqrt{a}} 
\end{equation}

\begin{rem}\label{rem_b}
Notice that $\frac{\varepsilon}{2\sqrt{e}(8!)^{1/8}M_{\delta}^{7/8}} \leqslant \frac{1}{\sqrt{8}}$ if and only if $\varepsilon \leqslant \sqrt{\frac{e}{2}}(8!)^{1/8}M_{\delta}^{7/8}$.
\end{rem}
\end{itemize}
\vspace{1\baselineskip}

Thus, according to (\ref{c}), (\ref{cj}) and (\ref{33}), setting
\begin{equation*}
a_{\varepsilon}= a_C(\varepsilon) \vee (2 \delta)
\end{equation*}

that is 
\begin{equation}\label{37}
a_{\varepsilon}=2 \delta - \left[\frac{C_P}{4} \ln \left( \frac{1}{16 M_{\delta}^4}\left( 2 +\frac{1}{M_{\delta}^4}\right) \left( \sqrt{1+\frac{\varepsilon^4 }{32(1+2M_{\delta}^4)^2}}-1 \right) \right)\right]_-
\end{equation}

where $x_-=min (x,0)$, and 
\begin{equation*}b(\varepsilon)=\left(\frac{\varepsilon}{2\sqrt{e}(8!)^{1/8}M_{\delta}^{7/8}} \; \wedge \; \frac{1}{\sqrt{8}} \right) \;\frac{1}{\sqrt{a_{\varepsilon}}}, 
\end{equation*}
that is,
\begin{equation}\label{38}b(\varepsilon)=\frac{\frac{\varepsilon}{2\sqrt{e}(8!)^{1/8}M_{\delta}^{7/8}} \; \wedge \; \frac{1}{\sqrt{8}} }{\sqrt{2 \delta - \left[\frac{C_P}{4} \ln \left( \frac{1}{16 M_{\delta}^4}\left( 2 +\frac{1}{M_{\delta}^4}\right) \left( \sqrt{1+\frac{\varepsilon^4 }{32(1+2M_{\delta}^4)^2}}-1 \right) \right)\right]_-} }
\end{equation}

we have finally obtained the following result :

\begin{prop}\label{prop1}
Assume that (H1) and (H2) are satisfied. Suppose that $a\geqslant a_{\varepsilon}$, defined in (\ref{37}).\\
Let $\varepsilon$ be a positive number such that $\|b\|_{\infty} \leqslant b(\varepsilon)$, given by (\ref{38}).\\
Then, for every cluster $\Gamma_{\tau}$,
\begin{equation}
|\Gamma_{\tau}| \leqslant \varepsilon ^{|\tau|}.
\end{equation}
\end{prop}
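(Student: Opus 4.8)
The plan is to assemble Proposition \ref{prop1} directly from the ingredients already prepared, essentially just bookkeeping the constants. The starting point is the generalized H\"older estimate obtained from Lemma \ref{3}, namely
\begin{equation*}
|\Gamma_{\tau}| \leqslant \prod_{j\in\tau}\bigl(8B_j(a)+8C_j(a)\bigr)^{1/4},
\end{equation*}
so it suffices to control $B_j(a)$ and $C_j(a)$ uniformly in $j\in\{-N,\dots,N-1\}$. First I would invoke the uniform bounds already derived: inequality (\ref{30}) for $B_j(a)$, valid under condition (\ref{c}) (equivalently $\|b\|_{\infty}\leqslant 1/\sqrt{8a}$), and inequality (\ref{21})--(\ref{cj}) for $C_j(a)$, valid for $a\geqslant 2\delta$. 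The key observation is that if both $B_j(a)\leqslant \varepsilon^4/16$ and $C_j(a)\leqslant \varepsilon^4/16$, then $8B_j(a)+8C_j(a)\leqslant \varepsilon^4$, and raising to the power $1/4$ and taking the product over the $|\tau|$ elements of $\tau$ yields exactly $|\Gamma_{\tau}|\leqslant \varepsilon^{|\tau|}$.

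Next I would verify the two threshold conditions. For the $C_j$ term, substituting the explicit form $\beta_{\delta}(a)=2M_{\delta}e^{-(a-2\delta)/C_P}$ into the quadratic inequality (\ref{cj}) and solving for $a$ shows that (\ref{cj}) holds precisely when $a\geqslant a_C(\varepsilon)$, with $a_C(\varepsilon)$ as in (\ref{cj3}); hence $C_j(a)\leqslant \varepsilon^4/16$ for all such $a$. For the $B_j$ term, from (\ref{30}) one reads off that $B_j(a)\leqslant\varepsilon^4/16$ as soon as $\sqrt{8!}\,e^2 M_{\delta}^{7/2}(a\|b\|_{\infty}^2)^2\leqslant \varepsilon^4/16$, i.e. $\|b\|_{\infty}\leqslant \varepsilon/(2\sqrt{e}(8!)^{1/8}M_{\delta}^{7/8}\sqrt{a})$, which is (\ref{33}). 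Since we also need (\ref{c}) to be in force for (\ref{30}) to apply, I would combine both constraints on $\|b\|_{\infty}$ by taking the minimum, which together with $a\geqslant a_{\varepsilon}:=a_C(\varepsilon)\vee(2\delta)$ gives exactly the bound $b(\varepsilon)$ of (\ref{38}). Collecting these statements proves the proposition.

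The main subtlety, rather than difficulty, is the consistency of the domain of validity: one must check that for $a\geqslant a_{\varepsilon}$ and $\|b\|_{\infty}\leqslant b(\varepsilon)$ all the auxiliary hypotheses used upstream genuinely hold simultaneously --- in particular that $a\geqslant 2\delta$ (needed for Lemma \ref{lem1} and the $C_j$ bound), that (\ref{c}) holds (needed for (\ref{30})), and that the quadratic-inequality manipulation giving $a_C(\varepsilon)$ is correctly signed. These are reconciled precisely by the definition $a_{\varepsilon}=a_C(\varepsilon)\vee(2\delta)$ and by building the $1/\sqrt{8}$ cap into $b(\varepsilon)$ via the wedge in (\ref{38}). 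Remarks \ref{rem_a} and \ref{rem_b} record the ranges of $\varepsilon$ for which the two branches of these wedges are active, so no further case analysis is needed. Everything else is the routine arithmetic already carried out in Sections \ref{study_bj} and \ref{study_cj}, so the proof is essentially a one-paragraph synthesis.
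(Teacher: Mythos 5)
Your proof follows the paper's argument in subsection 3.4 step for step: the H\"older estimate $|\Gamma_{\tau}| \leqslant \prod_{j\in\tau}\left(8B_j(a)+8C_j(a)\right)^{1/4}$, the reduction to the sufficient conditions $B_j(a), C_j(a) \leqslant \varepsilon^4/16$, and the extraction of the thresholds $a_{\varepsilon}$ and $b(\varepsilon)$ from (\ref{30}), (\ref{cj}) and (\ref{c}). One point worth flagging, though it is shared with the paper's own formulation: since (\ref{33}) reads $\|b\|_{\infty}\leqslant \mathrm{const}\cdot\varepsilon/\sqrt{a}$ and therefore tightens as $a$ increases, the hypothesis $\|b\|_{\infty}\leqslant b(\varepsilon)$ with $b(\varepsilon)$ built from $1/\sqrt{a_{\varepsilon}}$ guarantees (\ref{33}) only at $a=a_{\varepsilon}$ rather than for every $a\geqslant a_{\varepsilon}$; a fully consistent statement would either fix $a=a_{\varepsilon}$ or keep $1/\sqrt{a}$ in (\ref{38}).
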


\vspace{1\baselineskip}

\begin{rem} In order to connect with classical results about the cluster expansion method, we have to show that $\varepsilon$ can be expressed as a function of $b(\varepsilon)$ that will go to $0$ when $b(\varepsilon)$ goes to 0.\\

Suppose that \begin{equation}\label{epsilon_0}
\varepsilon \leqslant \left(2^{5/2}M_{\delta}^2(8M_{\delta}^8+2M_{\delta}^4+1)^{1/4}\right) \; \wedge \; \left(\sqrt{\frac{e}{2}}(8!)^{1/8}M_{\delta}^{7/8}\right) := \varepsilon_0. 
\end{equation} 

Then according to remarks \ref{rem_a} and \ref{rem_b}, 
\begin{equation}\label{b_epsilon}b(\varepsilon)=\frac{\varepsilon} {2\sqrt{2e\delta\;}(8!)^{1/8}M_{\delta}^{7/8}\sqrt{1 - \frac{C_P}{8 \delta} \ln \left( \frac{1}{16 M_{\delta}^4}\left( 2 +\frac{1}{M_{\delta}^4}\right) \left( \sqrt{1+\frac{\varepsilon^4 }{32(1+2M_{\delta}^4)^2}}-1 \right) \right)} }
\end{equation}

Compute the derivative of $b(\varepsilon)$ with respect to $\varepsilon$ :
\begin{equation*}b^{'}(\varepsilon)=\frac{b(\varepsilon)}{\varepsilon}\left(1+\frac{(8!)^{1/4}e \;C_P \; M_{\delta}^{7/4}}{32\; \delta (1+2M_{\delta}^4)^2} \frac{\varepsilon^2 \; b(\varepsilon)^2}{\left(\sqrt{1+\frac{\varepsilon^4 }{32(1+2M_{\delta}^4)^2}}-1\right)\sqrt{1+\frac{\varepsilon^4 }{32(1+2M_{\delta}^4)^2}}}\right)
\end{equation*}
$b^{'}(\varepsilon)$ is positive for every $\varepsilon$ in $(0, \varepsilon_0]$ ; thus, $\varepsilon \mapsto b(\varepsilon)$ is (strictly) increasing from $(0, \varepsilon_0]$ to $(0, b_{\varepsilon_0}]$.\\

Therefore, $b$ admits a reciprocal function : there exists a function $\eta : (0, b_{\varepsilon_0}] \rightarrow (0, \varepsilon_0]$ such that for every $\varepsilon \in (0, \varepsilon_0]$, $\eta (b(\varepsilon))=\varepsilon$ ; moreover, $\eta$ is increasing and $\eta(x)$ goes to $0$ when $x$ goes to $0$.
\end{rem}

We can now rewrite proposition \ref{prop1} in a more amenable way :
\begin{prop}\label{prop2}
Let $\varepsilon_0$ be as defined in (\ref{epsilon_0}) and associate it $b_{\varepsilon_0}$ according to (\ref{b_epsilon}).\\
Suppose that $a\geqslant a_{\varepsilon_0}$, as introduced in (\ref{37}).\\

If $\|b\|_{\infty} \leqslant b_{\varepsilon_0}$, then, for every cluster $\Gamma_{\tau}$,
\begin{equation}\label{ce}
|\Gamma_{\tau}| \leqslant \eta(\|b\|_{\infty})^{|\tau|}
\end{equation}
where $\eta$, defined just above, is a function that goes to $0$ in $0$.\\
\end{prop}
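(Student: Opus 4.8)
The plan is to deduce Proposition \ref{prop2} from Proposition \ref{prop1} together with the monotonicity and boundary analysis of the function $b(\varepsilon)$ collected in the preceding remark. The content of Proposition \ref{prop2} is essentially a reformulation: Proposition \ref{prop1} says that under the hypotheses, for a parameter $\varepsilon$ chosen so that $\|b\|_{\infty}\leqslant b(\varepsilon)$, one has $|\Gamma_{\tau}|\leqslant\varepsilon^{|\tau|}$; we want to turn this into a bound of the form $\eta(\|b\|_{\infty})^{|\tau|}$ that is visibly good because $\eta(x)\to 0$ as $x\to 0$. So the whole task is to exhibit $\eta$ and check it does what is claimed.

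First I would restrict to the regime $\varepsilon\leqslant\varepsilon_0$, so that, by Remarks \ref{rem_a} and \ref{rem_b}, the two minima in the definitions of $a_{\varepsilon}$ and $b(\varepsilon)$ are realized by the ``interesting'' branch and the explicit closed form \eqref{b_epsilon} holds; in particular $a_{\varepsilon}=2\delta - [\,\cdots\,]_-$ with the logarithm term, and $b(\varepsilon)$ is given by the clean expression. Next I would invoke the computation of $b'(\varepsilon)$ from the remark: since $b'(\varepsilon)>0$ on $(0,\varepsilon_0]$, the map $\varepsilon\mapsto b(\varepsilon)$ is a strictly increasing bijection from $(0,\varepsilon_0]$ onto $(0,b_{\varepsilon_0}]$, where $b_{\varepsilon_0}=b(\varepsilon_0)$. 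Hence it admits an increasing inverse $\eta:(0,b_{\varepsilon_0}]\to(0,\varepsilon_0]$, characterized by $\eta(b(\varepsilon))=\varepsilon$, and since $b(\varepsilon)\to 0$ as $\varepsilon\to 0$ (visible from \eqref{b_epsilon}, as the numerator is $\sim c\,\varepsilon$ and the denominator stays bounded away from $0$ on the relevant range), its inverse $\eta$ satisfies $\eta(x)\to 0$ as $x\to 0$.

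Now I would close the argument. Assume $a\geqslant a_{\varepsilon_0}$ and $\|b\|_{\infty}\leqslant b_{\varepsilon_0}$. Set $\varepsilon:=\eta(\|b\|_{\infty})\in(0,\varepsilon_0]$, so that $b(\varepsilon)=\|b\|_{\infty}$, hence trivially $\|b\|_{\infty}\leqslant b(\varepsilon)$. Since $\varepsilon\leqslant\varepsilon_0$ and $a_{\varepsilon}$ is increasing in $\varepsilon$ (the logarithm argument is increasing in $\varepsilon$, so $-[\,\cdot\,]_-$ is non-decreasing), we get $a\geqslant a_{\varepsilon_0}\geqslant a_{\varepsilon}$. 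Thus the hypotheses of Proposition \ref{prop1} are met with this choice of $\varepsilon$, and it yields $|\Gamma_{\tau}|\leqslant\varepsilon^{|\tau|}=\eta(\|b\|_{\infty})^{|\tau|}$ for every cluster, which is exactly \eqref{ce}.

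The only genuine point needing care — the ``main obstacle'', such as it is — is the monotonicity bookkeeping: one must be sure that $a_{\varepsilon}$ is non-decreasing in $\varepsilon$ on $(0,\varepsilon_0]$ so that the single threshold $a_{\varepsilon_0}$ dominates all the $a_{\varepsilon}$ that get used, and that $b(\varepsilon)$ is strictly increasing so that the inverse $\eta$ is well defined and single-valued; both reduce to inspecting the explicit formulas \eqref{37} and \eqref{b_epsilon} and the sign of $b'(\varepsilon)$ already recorded. Everything else is a direct substitution into Proposition \ref{prop1}. I would also remark that $\eta$ need not be given in closed form — its existence and the limit $\eta(x)\to 0$ as $x\to 0$ are all that the cluster expansion machinery of \cite{LS} requires downstream.
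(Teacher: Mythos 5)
Your reduction to Proposition \ref{prop1} hinges on the claim that $\varepsilon\mapsto a_{\varepsilon}$ is non-decreasing, so that $a\geqslant a_{\varepsilon_0}$ would give $a\geqslant a_{\varepsilon}$ for $\varepsilon=\eta(\|b\|_{\infty})\leqslant\varepsilon_0$. This is the wrong direction. In (\ref{cj3}) the argument of the logarithm is increasing in $\varepsilon$, so the log term is increasing; since $x\mapsto x_-=\min(x,0)$ is itself a non-decreasing map, the quantity $[\,\cdot\,]_-$ is non-decreasing in $\varepsilon$, and therefore $-[\,\cdot\,]_-$ is non-\emph{increasing}, not non-decreasing as you wrote. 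Hence $a_{\varepsilon}=2\delta-[\,\cdot\,]_-$ is non-increasing in $\varepsilon$, indeed strictly decreasing on $(0,\varepsilon_0]$ by Remark \ref{rem_a}. This is also what one expects conceptually: a smaller target $\varepsilon$ forces the $\|b\|$-independent term $C_j(a)$ to be smaller, which requires a larger $a$. Consequently $a_{\varepsilon}>a_{\varepsilon_0}$ for every $\varepsilon<\varepsilon_0$, so the hypothesis $a\geqslant a_{\varepsilon_0}$ does not entail $a\geqslant a_{\varepsilon}$, and the appeal to Proposition \ref{prop1} in your final step is not justified in precisely the regime $\|b\|_{\infty}<b_{\varepsilon_0}$ that matters.

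The gap is not cosmetic. With $a$ held fixed at $a_{\varepsilon_0}$, the quantity $C_j(a)$ does not depend on $\|b\|_{\infty}$ at all, and the proof of Proposition \ref{prop1} only guarantees $C_j(a)\leqslant\varepsilon_0^4/16$; sending $\|b\|_{\infty}\to 0$ cannot push $C_j(a)$ below this floor, so a bound $|\Gamma_{\tau}|\leqslant\eta(\|b\|_{\infty})^{|\tau|}$ with $\eta(\|b\|_{\infty})\to 0$ cannot be extracted from Proposition \ref{prop1} at fixed $a$. A correct reduction must keep the coupling between the two parameters: one should require $a\geqslant a_{\eta(\|b\|_{\infty})}$ (a threshold that grows as $\|b\|_{\infty}$ shrinks) rather than the fixed threshold $a\geqslant a_{\varepsilon_0}$. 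The paper gives no separate argument for Proposition \ref{prop2}, presenting it as an immediate rewriting of Proposition \ref{prop1}, so the monotonicity bookkeeping really was the whole content to be supplied here, and it is exactly that bookkeeping which carries the sign error.
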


\section{Main theorem and consequences}

Recall that we wish to prove the convergence of the sequence of measures $(Q_N)_N$, with $Q_N(du)= \exp(-H_N(u^{(N)}))P(du)$, towards a  weak stationary solution $Q$ of the perturbed equation (\ref{2}).\\

Proposition \ref{prop2}, just above, is the key point to prove this convergence : the cluster representation (\ref{cr}) of the partition function $Z_N$ and the cluster estimate (\ref{ce}) are the crucial elements in order to obtain in a canonical way an expansion for the measures $Q_N$ (see \cite{MM}).\\

It has been done in details in both \cite{DR} and \cite{MRZ} (see for instance paragraph 3.1.4 of \cite{MRZ}, with lemma 10 and what follows) : one obtains a representation of a localized bounded function with respect to the measure $Q_N$ and proves its convergence, uniformly in $N$ when $\varepsilon$ is small enough.\\

This leads to the following result : 

\begin{thm} Assume (H1) and (H2). For $\|b\|_{\infty}$ small enough, there exists a unique probability measure $Q$ on $\Omega$ such that : 
\begin{equation*}
Q=\lim_{N\rightarrow\infty} Q_N
\end{equation*}
\end{thm} 

One should note that despite the explicit bounds obtained in (\ref{38}) and (\ref{epsilon_0}), the cluster expansion method does not furnish an explicit expression for the required smallness of $\|b\|_{\infty}$.\\

Further results taken from Gibbs field theory (see proposition 2 and lemma 4 in \cite{DR}) ensure that the probability measure $Q$ is indeed a weak stationary solution of the equation :
\begin{equation}\tag{5}
dx_t=\left(-\frac{1}{2}\;\sigma\;\nabla V(x_t) + \sigma \; b((x)_{t-t_0}^t)\right)dt + \sigma \; dB_t
\end{equation}

Hence our main result :

\begin{thm}\label{th1}

Assume (H1) and (H2).

If $\|b\|_{\infty}$  is small enough, then

\begin{itemize}
\item the semi-group associated with the above equation (\ref{2}) converges towards its unique invariant probability measure $\nu$ exponentially fast ; 

\item furthermore, there exists a constructive way of obtaining this measure $\nu$ ;

\item finally, the property of exponential decorrelation is satisfied : there exist two constants $\theta_1$ and $\theta_2$ such that for all $f$ and $g$ measurable and bounded by $1$, \begin{equation*}
|cov(f(x_t),g(x_{t+s}))| \leqslant \theta_1 \; e^{-\theta_2 s}
\end{equation*}
\end{itemize}

\end{thm}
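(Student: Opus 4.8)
\textbf{Proof plan for Theorem \ref{th1}.}

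The plan is to deduce all three conclusions from the cluster estimate of Proposition \ref{prop2} together with the cluster representation \eqref{cr} of the partition function, following the canonical machinery of Gibbs field theory as in \cite{MM}, \cite{DR}, \cite{MRZ} and \cite{LS}. First I would fix $\varepsilon < \varepsilon_0$ small enough that the cluster series converges absolutely: the standard combinatorial lemma (Kotecky--Preiss type, or the estimates of \cite{MM}) guarantees that when $\eta(\|b\|_\infty) < \varepsilon_c$ for an explicit critical value $\varepsilon_c$ depending only on the one-dimensional geometry of the index set, the sum $\sum_p \sum_{\tau_1 \sqcup \cdots \sqcup \tau_p} \prod_i |\Gamma_{\tau_i}|$ is bounded uniformly in $N$. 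This legitimizes taking logarithms, so that $\ln Z_N$ has a convergent \emph{connected} cluster expansion with exponentially decaying weights; by the same token, the density $\exp(-H_N(u^{(N)}))/Z_N$ of $Q_N$ with respect to $P$, restricted to any fixed bounded time window, admits a polymer-type expansion whose terms decay exponentially in the temporal distance of a polymer from that window.

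Next I would establish the convergence $Q_N \to Q$. For a local bounded observable $F$ depending only on the trajectory over a fixed interval, one writes $\mathbb{E}_{Q_N}[F]$ using the cluster expansion of the density and shows, exactly as in paragraph 3.1.4 of \cite{MRZ} (its Lemma 10 and the lines after it), that the difference $\mathbb{E}_{Q_N}[F] - \mathbb{E}_{Q_M}[F]$ for $N, M$ large is controlled by the tail of the convergent series, hence is Cauchy; the limit defines $Q$ on the algebra of local functions and extends to a probability measure on $\Omega$ by a standard tightness/consistency argument. Uniqueness follows because any weak stationary solution of \eqref{2} with $\|b\|_\infty$ small has a density of the same exponential form and therefore the same cluster expansion, so its local marginals coincide with those of $Q$. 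That $Q$ is genuinely a weak stationary solution of \eqref{2}, and the existence of the invariant measure $\nu$ as its time-marginal, is then quoted from Proposition 2 and Lemma 4 of \cite{DR}; the ``constructive'' clause is precisely the statement that $\nu$ is obtained as the explicit $N \to \infty$ limit just built.

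Finally, for exponential decorrelation and exponential convergence of the semi-group, I would apply the cluster expansion to the joint law of $(x_t, x_{t+s})$: writing $\mathrm{cov}(f(x_t), g(x_{t+s}))$ as a difference of expectations under $Q$, every cluster contributing to the covariance must connect the time window near $t$ to the window near $t+s$, hence must span a temporal gap of order $s$; since each cluster of size $|\tau|$ carries a factor $\eta(\|b\|_\infty)^{|\tau|}$ and a connected cluster bridging distance $s$ has $|\tau| \gtrsim s/a$, the surviving terms are bounded by $\theta_1 e^{-\theta_2 s}$ with $\theta_2 \sim (1/a)\ln(1/\eta(\|b\|_\infty))$ and $\theta_1$ absorbing the combinatorial sum over cluster shapes. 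Exponential convergence of the semi-group to $\nu$ is the same estimate with $f$ replaced by a test function and $g$ by the density of the initial law, taking $s \to \infty$. The main obstacle is the combinatorial bookkeeping needed to make ``every contributing cluster bridges the gap'' precise and to carry out the resummation with uniform-in-$N$ control; this is routine in the Gibbsian literature but delicate, and since it follows the lines of \cite{LS} and \cite{MRZ} verbatim I would not reproduce it in full here.
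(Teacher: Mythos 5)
Your proposal follows essentially the same route as the paper: the paper itself does not give a self-contained proof of Theorem \ref{th1} but, exactly as you do, invokes the cluster estimate of Proposition \ref{prop2} together with the canonical Gibbs-field machinery — the convergence of $Q_N$ via paragraph 3.1.4 (Lemma 10 and sequel) of \cite{MRZ} and \cite{MM}, the identification of $Q$ as a weak stationary solution via Proposition 2 and Lemma 4 of \cite{DR}, and the exponential decorrelation from short-range correlation results (the paper cites Theorem 3.1 and Corollary 3.1 of \cite{RS}, whereas you sketch the underlying ``bridging clusters decay geometrically in their span'' argument directly, which is the content of that citation). The only cosmetic difference is that you re-derive the decorrelation bound rather than quote it; the substance, sources, and order of the argument match the paper's.
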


The final assertion follows from the fact that the short-range correlations hold (see, for instance Theorem 3.1 and Corolllary 3.1 in \cite{RS}).\\

As the correlations decay at an exponential rate, we have strong mixing properties, and, in particular, the central limit theorem below. Though this process is not Markovian, the proof of the following corollary is similar to the famous result obtained by Kipnis and Varadhan in \cite{KV} expanded in \cite{CCG}.

\begin{coro}
If a smooth $f$ is such that $\int f d\nu=0$, then
\begin{equation*}
\frac{1}{\sqrt{t}} \int_0^t f(x_s) \; ds \overset{(d)}{\underset{t\rightarrow +\infty}{\longrightarrow}} \mathcal{N}(0,\sigma^2_f)
\end{equation*}
with $$\sigma^2_f:=2\int_{-\infty}^{+\infty} \mathbb{E}_{\nu}[f(x_0)f(x_s)] \;ds= \int |\sigma \nabla f |^2 \; d\nu$$
\end{coro}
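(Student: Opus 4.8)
The plan is to deduce the central limit theorem from the exponential decorrelation property already established in Theorem \ref{th1}, following the classical strategy of Kipnis--Varadhan as adapted to non-Markovian stationary processes. First I would work under the stationary measure $\nu$, so that $(x_s)_{s \geq 0}$ is a stationary process with $\mathbb{E}_\nu[f(x_0)] = 0$. Write $S_t = \int_0^t f(x_s)\,ds$. The starting point is the exact variance computation
\begin{equation*}
\mathbb{E}_\nu\left[\left(\frac{S_t}{\sqrt{t}}\right)^2\right] = \frac{1}{t}\int_0^t\int_0^t \mathbb{E}_\nu[f(x_u)f(x_v)]\,du\,dv = \frac{2}{t}\int_0^t (t-s)\,R(s)\,ds,
\end{equation*}
where $R(s) = \mathbb{E}_\nu[f(x_0)f(x_s)]$. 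Since $|f| \leq \|f\|_\infty$ (or after normalising $f$ to be bounded by $1$), the exponential decorrelation bound gives $|R(s)| \leq \theta_1 \|f\|_\infty^2 e^{-\theta_2 s}$, hence $R \in L^1(\mathbb{R}_+)$, so the Cesàro-type average converges and
\begin{equation*}
\sigma_f^2 := \lim_{t \to \infty} \mathbb{E}_\nu\left[\left(\tfrac{S_t}{\sqrt t}\right)^2\right] = 2\int_0^{+\infty} R(s)\,ds = 2\int_{-\infty}^{+\infty}\mathbb{E}_\nu[f(x_0)f(x_s)]\,ds,
\end{equation*}
using $R(-s) = R(s)$ by stationarity and reversibility. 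The identification with $\int |\sigma\nabla f|^2\,d\nu$ would come from the Green--Kubo / resolvent formula: for the reference dynamics one has $\sigma_f^2 = 2\langle f, (-L)^{-1} f\rangle_{L^2(\mu)} = \int |\sigma \nabla f|^2 d\mu$ after an integration by parts against the symmetric generator, and here the analogous Dirichlet-form identity holds for the perturbed stationary measure $\nu$; I would invoke the perturbed analogue of the carré-du-champ identity together with the fact that $\sigma\sigma^*$ is the diffusion matrix unchanged by the bounded drift perturbation $b$.

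For the convergence in distribution itself, the mixing input is the key: exponential decorrelation implies the process is strongly mixing (indeed $\alpha$-mixing with exponentially decaying coefficients — here one needs that the covariance bound, valid a priori only for functions of $x_t$ and $x_{t+s}$, upgrades to a genuine mixing coefficient bound for the $\sigma$-algebras $\sigma(x_u : u \leq t)$ and $\sigma(x_u : u \geq t+s)$, which is where Theorem 3.1 and Corollary 3.1 of \cite{RS} on short-range correlations are used). Given exponential strong mixing and the $L^2$ (in fact $L^{2+\delta}$, since $f$ is bounded) control of increments, a standard continuous-time CLT for mixing sequences — the Bernstein big-block/small-block argument, or directly the martingale-approximation method of Kipnis--Varadhan as carried out in \cite{CCG} — yields $t^{-1/2} S_t \Rightarrow \mathcal{N}(0,\sigma_f^2)$. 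Concretely I would discretise: set $\xi_k^{(n)} = \int_{(k-1)\ell_n}^{k\ell_n} f(x_s)\,ds$ over blocks of length $\ell_n \to \infty$ slowly, separate consecutive blocks by spacers of length $o(\ell_n)$ but $\to \infty$ so that the mixing coefficient between blocks is negligible, show the spacer contributions are negligible in $L^2$, couple the separated blocks with genuinely independent copies at a cost controlled by the mixing coefficient, and apply the Lindeberg CLT to the resulting triangular array, using boundedness of $f$ to check the Lindeberg condition and the variance asymptotics above to get the limiting variance $\sigma_f^2$.

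The main obstacle I anticipate is not the abstract CLT machinery, which is classical, but the passage from "exponential decay of covariances of one-time marginals" to "exponential decay of a strong-mixing coefficient between the whole past and the whole future" in this non-Markovian, Gibbsian setting. This is exactly the content of the short-range-correlation estimates from Gibbs field theory: because $Q$ is built as a Gibbs measure on path space with an interaction whose cluster weights $\Gamma_\tau$ satisfy the smallness bound $|\Gamma_\tau| \leq \eta(\|b\|_\infty)^{|\tau|}$ of Proposition \ref{prop2}, the standard Gibbsian arguments (Dobrushin-type uniqueness, exponential clustering of the cluster expansion) give exponential decay of correlations between events measurable with respect to well-separated regions of the time axis, uniformly over such events — which is precisely an $\alpha$-mixing bound. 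I would therefore cite \cite{RS} (Theorem 3.1 and Corollary 3.1) for this upgrade, exactly as the paper indicates, and treat it as a black box rather than reproving it; the remaining work — variance identification and the block argument — is then routine and parallels \cite{KV} and \cite{CCG}. A secondary, minor point to be careful about is that the CLT is stated for the stationary process started from $\nu$, and one should remark (again via the exponential convergence of the semi-group towards $\nu$ from the first bullet of Theorem \ref{th1}) that the same limit holds for the process started from other reasonable initial laws, the difference of the two running averages being $O(t^{-1/2})$ in probability.
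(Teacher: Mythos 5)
Your overall strategy is the one the paper gestures at: it offers no detailed proof, only the one-line remark that the conclusion follows from exponential decorrelation ``similar to Kipnis--Varadhan \cite{KV} as expanded in \cite{CCG}.'' You correctly identify the central technical point --- that one must upgrade the covariance decay for one-time marginals to a genuine $\alpha$-mixing bound between past and future $\sigma$-algebras, and that this upgrade is exactly what the short-range-correlation estimates from the cluster expansion (\cite{RS}, Theorem 3.1 and Corollary 3.1) provide --- and your blocking argument is a legitimate alternative to the martingale approximation the paper names. For a non-Markovian process the blocking/mixing route is arguably the more natural one, since the Kipnis--Varadhan resolvent construction has no direct analogue without a Markov generator; so this divergence is a reasonable choice, not a flaw.

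There are, however, two genuine computational problems in the variance identification. First, your chain
\begin{equation*}
\sigma_f^2 = 2\int_0^{+\infty} R(s)\,ds = 2\int_{-\infty}^{+\infty}\mathbb{E}_{\nu}[f(x_0)f(x_s)]\,ds
\end{equation*}
is internally inconsistent: by stationarity and reversibility $R(-s)=R(s)$, so $2\int_0^\infty R(s)\,ds = \int_{-\infty}^{+\infty}R(s)\,ds$, not $2\int_{-\infty}^{+\infty}R(s)\,ds$. Your Ces\`aro computation of $\lim_t \mathbb{E}_\nu[(S_t/\sqrt t)^2]$ is right and gives $\int_{-\infty}^{+\infty}R$; the extra factor $2$ appears to be inherited from the paper's own statement of the corollary, but you should flag it rather than reproduce it. Second, the claim that the Green--Kubo formula plus ``integration by parts against the symmetric generator'' gives $2\langle f,(-L)^{-1}f\rangle_{L^2(\mu)} = \int |\sigma\nabla f|^2\,d\mu$ is incorrect: integration by parts expresses the Dirichlet form $\langle f,(-L)f\rangle = \tfrac12\int|\sigma\nabla f|^2\,d\mu$, not the resolvent pairing $\langle f,(-L)^{-1}f\rangle$. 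In the Kipnis--Varadhan framework, if $u$ solves the Poisson equation $-Lu=f$, then $\sigma_f^2 = 2\mathcal{E}(u,u)=\int|\sigma\nabla u|^2\,d\mu$, with $u$ and not $f$ under the gradient. A simple Ornstein--Uhlenbeck test (take $f$ a second Hermite polynomial, $\lambda=\sigma=1$) shows $\sigma_f^2$ and $\int|\sigma\nabla f|^2\,d\mu$ differ by a factor of $4$; they coincide only for eigenfunctions of $L$ with eigenvalue $-1$. The identity as written in the corollary appears to require either replacing $f$ by the solution of the Poisson equation for $Q$'s effective generator, or some normalisation not visible from the paper's one-line justification; your proposal should either produce the corrected identity or explicitly record that this step is unjustified as stated. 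Apart from these two points, the argument (boundedness of $f$ ensuring the Lindeberg condition, negligibility of spacers, coupling cost controlled by the mixing coefficient) is routine and sound.
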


\begin{rem}
Similar results hold if the diffusion matrix $\sigma$ is not square, under the assumption that $\sigma \sigma ^*$ is invertible.
\end{rem}

\begin{rem}
We recall hypothesis (H2) : there exists $\delta\geqslant 0$ such that $$\sup_{t\geqslant\delta}\|p(t,., .)\|_{\mathcal{L}^8(\mu\otimes\mu)}<\infty.$$

It is not optimal, in the sense that it could be weakened and our results would still hold, with similar computations. Indeed, if, instead of Cauchy-Schwarz's inequality, we would use Hölder's inequality for the study of $B_j(a)$ (resp. $C_j(a)$) in section \ref{study_bj} (resp. \ref{study_cj}), we could substitute the power $8$ in (H2) by $4+\eta$ for any $\eta>0$ (resp. by $6$), the limiting case being $B_{N-1}$ (resp. $C_{N-1}$).\\

Thus, (H2) could be replaced by (H2') :  there exists $\delta\geqslant 0$ such that $$\sup_{t\geqslant\delta}\|p(t,., .)\|_{\mathcal{L}^6(\mu\otimes\mu)}<\infty,$$

the counterpart being that condition (\ref{c}) would then be more restrictive and lead to a greater $a_{\varepsilon}$ for a given $b(\varepsilon)$.
\end{rem}

\section{A concrete example : the Ornstein-Uhlenbeck case}\label{OU}

Suppose the reference drift $g$ is a linear one.\\

In order to simplify the writing of the computations, we restrict ourselves to the one-dimensional situation $d=1$ ; the behaviour in higher dimensions is completely similar. \\

We are thus considering the one-dimensional Ornstein-Uhlenbeck equation :
\begin{equation}
dy_t=-\lambda y_t \; dt + \sigma \; dB_t
\end{equation}
where $\lambda$ and $\sigma$ are positive parameters and $(B_t)$ is a standard one-dimensional Brownian motion.\\

It is a process whose explicit expression (with respect to the Brownian motion $B$) and general behaviour are well-known ; in particular, it admits the Gaussian law $\mathcal{N}(0,\sigma^2/2\lambda)$, whose density is given by \begin{equation*}\mu(dy)=\sqrt{\frac{\lambda}{\pi \sigma^2}}e^{-\lambda y^2 /\sigma^2}dy,
\end{equation*} as its (unique) symmetric probability measure.\\

Furthermore, the transition density of $(y_t)$ with respect to $\mu$ is given by 
\begin{equation}
p(t,x,y)=\frac{1}{\sqrt{1-e^{-2\lambda t}}}\exp\left(-\frac{\lambda}{\sigma^2(1-e^{-2\lambda t})}\left((x^2+y^2)e^{-2\lambda t}-2xye^{-\lambda t}\right)\right).
\end{equation}

Thus, all the assumptions made at the beginning of section \ref{1.2} are satisfied, as are hypotheses (H1) and (H2). Indeed, thanks to a well-known result (see, for instance, \cite{ABC}) on Poincaré's inequalities verified by Gaussian measures, for a smooth function $f$,
\begin{equation}
Var_{\mu}(f)\leqslant \frac{\sigma^2}{2\lambda} \int  \; (f')^2 \; d\mu 
\end{equation}
which implies that (H1) holds, with $C_P=1/2\lambda$.\\

Moreover, (H2) follows from the lemma below :

\begin{lem}
For every positive $t$ and for $k \in \mathbb{N}^*$,

\begin{equation}\label{ll}
\int_{\mathbb{R}^2} p(t,x,y)^k \mu(dx)\mu(dy)=\frac{1}{(1-e^{-2\lambda t})^{k/2-1}\sqrt{(1+(k-1)e^{-2\lambda t})^2-k^2e^{-2\lambda t}}}
\end{equation}
\end{lem}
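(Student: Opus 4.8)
The plan is to compute the Gaussian integral in~\eqref{ll} directly, exploiting the fact that $p(t,x,y)^k\,\mu(dx)\,\mu(dy)$ is, up to a prefactor, an unnormalized centered Gaussian density on $\R^2$, so the integral reduces to evaluating a determinant. First I would set $s=e^{-\lambda t}\in(0,1)$ to lighten the notation, so that $1-e^{-2\lambda t}=1-s^2$, and write out
\begin{equation*}
p(t,x,y)^k=\frac{1}{(1-s^2)^{k/2}}\exp\left(-\frac{k\lambda}{\sigma^2(1-s^2)}\bigl((x^2+y^2)s^2-2xys\bigr)\right),
\end{equation*}
while $\mu(dx)\mu(dy)=\frac{\lambda}{\pi\sigma^2}\exp\bigl(-\frac{\lambda}{\sigma^2}(x^2+y^2)\bigr)dx\,dy$. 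Multiplying these, the exponent is a negative-definite quadratic form $-\frac{\lambda}{\sigma^2}\,(x,y)\,Q\,(x,y)^{\!*}$ in $(x,y)$, and I would read off the symmetric matrix $Q$. A quick computation gives the diagonal entries $1+\frac{ks^2}{1-s^2}=\frac{1-s^2+ks^2}{1-s^2}=\frac{1+(k-1)s^2}{1-s^2}$ and the off-diagonal entries $-\frac{ks}{1-s^2}$.

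Next I would apply the standard two-dimensional Gaussian integral formula: for a positive-definite $2\times2$ matrix $A$,
\begin{equation*}
\int_{\R^2}e^{-(x,y)A(x,y)^{\!*}}\,dx\,dy=\frac{\pi}{\sqrt{\det A}}.
\end{equation*}
Here $A=\frac{\lambda}{\sigma^2}Q$, so $\det A=\frac{\lambda^2}{\sigma^4}\det Q$, and the integral~\eqref{ll} becomes
\begin{equation*}
\frac{\lambda}{\pi\sigma^2}\cdot\frac{1}{(1-s^2)^{k/2}}\cdot\frac{\pi}{\sqrt{\frac{\lambda^2}{\sigma^4}\det Q}}=\frac{1}{(1-s^2)^{k/2}\sqrt{\det Q}}.
\end{equation*}
It then remains to compute $\det Q=\frac{1}{(1-s^2)^2}\Bigl((1+(k-1)s^2)^2-k^2s^2\Bigr)$, so that $\sqrt{\det Q}=\frac{1}{1-s^2}\sqrt{(1+(k-1)s^2)^2-k^2s^2}$ provided the bracket is nonnegative, and the whole expression collapses to
\begin{equation*}
\frac{1}{(1-s^2)^{k/2-1}\sqrt{(1+(k-1)s^2)^2-k^2s^2}},
\end{equation*}
which is exactly~\eqref{ll} after substituting back $s=e^{-\lambda t}$.

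The only genuine points requiring care, rather than routine algebra, are checking that $Q$ (equivalently $A$) is positive-definite so the Gaussian integral converges and the formula applies, and that the quantity $(1+(k-1)s^2)^2-k^2s^2$ under the square root is strictly positive for $s\in(0,1)$ and $k\geq1$. For the former, positive-definiteness follows since the top-left entry $\frac{1+(k-1)s^2}{1-s^2}$ is positive and $\det Q$ equals the latter expression divided by $(1-s^2)^2$; so both reduce to the same inequality. To see $(1+(k-1)s^2)^2>k^2s^2$, I would factor it as a difference of squares, $(1+(k-1)s^2-ks)(1+(k-1)s^2+ks)$; the second factor is manifestly positive, and the first factor, viewed as a quadratic in $s$ with value $1$ at $s=0$ and value $0$ at $s=1$ (since $1+(k-1)-k=0$), is positive on $[0,1)$ because its roots are $s=1$ and $s=\frac{1}{k-1}\geq1$ (for $k\geq2$; for $k=1$ the first factor is simply $1-s>0$). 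This settles convergence and validity, and I expect this positivity bookkeeping to be the main, albeit mild, obstacle; everything else is a direct Gaussian computation.
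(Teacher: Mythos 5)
Your main computation is correct and is essentially the paper's proof: both write the integrand as an unnormalized Gaussian and evaluate the two-dimensional Gaussian integral. The paper completes the square in $x$ and integrates iteratively, which is the same thing as your determinant computation written out by hand; the normalizing prefactors, the quadratic form $Q$, and the final simplification all match.

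However, the positivity discussion you flag as ``the only genuine point requiring care'' contains a genuine error. Writing $s=e^{-\lambda t}$, the first factor $1-ks+(k-1)s^2$ has roots $s=1$ and $s=\tfrac{1}{k-1}$, and for $k\geq 2$ you have $\tfrac{1}{k-1}\leqslant 1$, not $\geqslant 1$ (with equality only at $k=2$). For $k\geq 3$ the leading coefficient $k-1$ is positive, so the factor is \emph{negative} on $\bigl(\tfrac{1}{k-1},1\bigr)$; consequently $(1+(k-1)s^2)^2-k^2s^2<0$ there, the matrix $Q$ fails to be positive definite, and the integral diverges. So the lemma's identity does \emph{not} hold ``for every positive $t$'' but only when $e^{-\lambda t}<\tfrac{1}{k-1}$, i.e.\ $t>\tfrac{\ln(k-1)}{\lambda}$ (for $k\geq2$). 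The paper is in fact aware of this: immediately after the lemma it observes that for $k=8$ the norm is finite iff $a>\tfrac{\ln 7}{\lambda}$, which is exactly this threshold. You should fix the sign in your root comparison, replace the (false) conclusion that $Q$ is always positive definite by the correct convergence condition, and note that under this condition the formula holds.
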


\vspace{1\baselineskip}

We thus have immediatly :

\begin{coro}
For every integer $k$,$\|p(t,.,.)\|_{L^k(\mu\otimes\mu)}$ goes to $1$ when $t$ goes to infinity, and for every $K>1$, there exists $t_K$ such that
\begin{equation}
\sup_{t\geqslant t_K} \|p(t,.,.)\|_{L^k(\mu\otimes\mu)} \leqslant K
\end{equation}

\end{coro}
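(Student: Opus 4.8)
The plan is to derive the corollary directly from the lemma that immediately precedes it, namely the closed-form expression
\begin{equation*}
\int_{\mathbb{R}^2} p(t,x,y)^k \, \mu(dx)\mu(dy)=\frac{1}{(1-e^{-2\lambda t})^{k/2-1}\sqrt{(1+(k-1)e^{-2\lambda t})^2-k^2e^{-2\lambda t}}}.
\end{equation*}
First I would fix $k\in\mathbb{N}^*$ (the case $k=1$ being trivial since $p$ integrates to $1$, so assume $k\geqslant 2$) and write $x=e^{-2\lambda t}$, which runs over $(0,1)$ and tends to $0$ as $t\to\infty$. Substituting, the right-hand side becomes the function $\varphi(x)=\left[(1-x)^{k/2-1}\sqrt{(1+(k-1)x)^2-k^2x}\right]^{-1}$. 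The first step is to check that the quantity under the square root, $(1+(k-1)x)^2-k^2x = 1+(2(k-1)-k^2)x+(k-1)^2x^2$, is strictly positive for $x\in[0,1)$ so that $\varphi$ is well-defined and continuous there; this is a short verification (at $x=0$ it equals $1$; one checks it does not vanish on $[0,1)$, e.g.\ by noting the roots of the quadratic, if real, are $\geqslant 1$). Then as $x\to 0^+$ we have $(1-x)^{k/2-1}\to 1$ and $(1+(k-1)x)^2-k^2x\to 1$, hence $\varphi(x)\to 1$. Taking $k$-th roots, $\|p(t,.,.)\|_{L^k(\mu\otimes\mu)}=\varphi(e^{-2\lambda t})^{1/k}\to 1$ as $t\to\infty$.

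For the second assertion, let $K>1$. Since $\varphi^{1/k}$ is continuous on $[0,1)$ with $\varphi(0)^{1/k}=1<K$, there exists $x_K\in(0,1)$ such that $\varphi(x)^{1/k}\leqslant K$ for all $x\in[0,x_K]$. Setting $t_K=-\frac{1}{2\lambda}\ln x_K$, the map $t\mapsto e^{-2\lambda t}$ is decreasing, so for every $t\geqslant t_K$ we have $e^{-2\lambda t}\leqslant x_K$, hence $\|p(t,.,.)\|_{L^k(\mu\otimes\mu)}=\varphi(e^{-2\lambda t})^{1/k}\leqslant K$. This gives $\sup_{t\geqslant t_K}\|p(t,.,.)\|_{L^k(\mu\otimes\mu)}\leqslant K$, as required. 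Applying this with $k=8$ and any fixed $K>1$ shows in particular that (H2) holds, with $\delta=t_K$, which is the point of stating the corollary.

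The only mild subtlety — and the step I would be most careful about — is confirming positivity of the radicand on the whole interval $[0,1)$, since a sign change there would make both the lemma's formula and the corollary vacuous; but this is an elementary estimate on a quadratic in $x$, not a genuine obstacle. Everything else is a monotonicity and continuity argument with the explicit function $\varphi$, so no further machinery is needed.
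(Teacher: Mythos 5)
Your overall strategy — derive the corollary directly from the explicit formula in the preceding lemma by substituting $x=e^{-2\lambda t}$ and studying the limit as $x\to 0^+$ — is exactly the paper's intent (the paper offers no separate proof, just the remark ``we thus have immediately''). However, the step you yourself flag as the ``mild subtlety'' is precisely where you go wrong: the radicand is \emph{not} positive on all of $[0,1)$, and the roots of the quadratic are \emph{not} both $\geqslant 1$. Factor it: for $k\geqslant 2$,
\begin{equation*}
(1+(k-1)x)^2-k^2x \;=\; (k-1)^2x^2-\bigl((k-1)^2+1\bigr)x+1 \;=\; \bigl((k-1)^2x-1\bigr)(x-1),
\end{equation*}
so the roots are $x=1$ and $x=1/(k-1)^2$, and the latter lies strictly inside $(0,1)$ as soon as $k\geqslant 3$. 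The radicand is therefore negative on $\bigl(1/(k-1)^2,\,1\bigr)$, which is consistent with the paper's own observation for $k=8$ that $\|p(a,\cdot,\cdot)\|_{\mathcal{L}^8(\mu\otimes\mu)}$ is finite if and only if $a>\ln(7)/\lambda$ (i.e.\ $e^{-2\lambda a}<1/49$).

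Fortunately this does not sink the proof, because the corollary only requires information near $x=0$. Replace your claim by: the radicand equals $1$ at $x=0$ and its smallest positive root is $1/(k-1)^2>0$, so $\varphi$ is well-defined, real, continuous, and equal to $1$ at $x=0$ on the interval $\bigl[0,\,1/(k-1)^2\bigr)$. Then your continuity argument yields $x_K\in\bigl(0,1/(k-1)^2\bigr)$ with $\varphi^{1/k}\leqslant K$ on $[0,x_K]$, and $t_K=-\frac{1}{2\lambda}\ln x_K$ works as before; the limit $\|p(t,\cdot,\cdot)\|_{L^k(\mu\otimes\mu)}\to 1$ follows the same way. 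With that one correction the proof matches the paper's intended ``immediate'' deduction.
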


\begin{proof}
Set $I_k(t)=\int_{\mathbb{R}^2} p(t,x,y)^k \mu(dx)\mu(dy)$.\\

Then, letting $K_t  =  \frac{\lambda}{\sigma^2(1-e^{-2\lambda t})}$, $c_t  = 1+(k-1)e^{-2\lambda t}$, and $d_t  =  ke^{-\lambda t}$, \\

 $\begin{array}{lll}
I_k(t) & = & \frac{\lambda}{\pi\sigma^2}(1-e^{-2\lambda t})^{-k/2}\int_{\mathbb{R}^2}\exp\left(-\frac{\lambda}{\sigma^2(1-e^{-2\lambda t})}\left((1+(k-1)e^{-2\lambda t})(x^2+y^2)-2kxye^{-\lambda t}\right)\right)\;dxdy \\

& = & \frac{\lambda}{\pi\sigma^2}(1-e^{-2\lambda t})^{-k/2}\int_{\mathbb{R}^2}\exp\left(-K_tc_t(x-\frac{d_t}{c_t}y)^2\right)\exp\left(-K_tc_t(1-\frac{d_t^2}{c_t^2})y^2\right)\;dxdy \\

& = & \frac{\lambda}{\pi\sigma^2}(1-e^{-2\lambda t})^{-k/2} \sqrt{\frac{\pi}{K_tc_t}} \sqrt{\frac{\pi}{K_tc_t(1-\frac{d_t^2}{c_t^2})}} \\

& = & \frac{\lambda}{\sigma^2}(1-e^{-2\lambda t})^{-k/2} \frac{1}{K_t \sqrt{c_t^2-d_t^2}} = (1-e^{-2\lambda t})^{1-k/2}\frac{1}{\sqrt{c_t^2-d_t^2}}

\end{array}$

\vspace{1\baselineskip}

Hence the result we were looking for.\\
\end{proof} 

In particular, \begin{equation*}
\|p(a,., .)\|_{\mathcal{L}^8(\mu\otimes\mu)}^8=(1-e^{-2\lambda a})^{-3}(1-50 e^{-2\lambda a}+49 e^{-4\lambda a})^{-1/2}
\end{equation*}
 which is finite if and only $a>\frac{\ln(7)}{\lambda}$.\\
 
 Besides, a study of the function $a \mapsto (1-e^{-2\lambda a})^{-3}(1-50 e^{-2\lambda a}+49 e^{-4\lambda a})^{-1/2}$ shows that it is decreasing towards $1$ on the open interval $\left(\frac{\ln(7)}{\lambda}, +\infty \right)$.\\

 Thus, for every $\delta >\frac{\ln(7)}{\lambda}$, $$\sup_{a\geqslant\delta}\|p(a,., .)\|_{\mathcal{L}^8(\mu\otimes\mu)} <\infty$$ and, furthermore,  \begin{equation}\sup_{a\geqslant\delta}\|p(a,., .)\|_{\mathcal{L}^8(\mu\otimes\mu)}=(1-e^{-2\lambda \delta})^{-3/8}(1-50 e^{-2\lambda \delta}+49 e^{-4\lambda \delta})^{-1/16}=M_{\delta} 
 \end{equation} where $M_{\delta}$ corresponds to the constant defined in (\ref{M}). Its graph can be seen in figure \ref{figure1}, when $\lambda$ is $1$ (as $M_{\delta}$ can be seen as a function of the product $\lambda\delta$, the value chosen for $\lambda$ here is not of much consequence). Notice that it quickly becomes very close to $1$.\\


The perturbed equation is 
\begin{equation}\label{46}
dx_t=\left(-\lambda x_t + \sigma b((x)_{t-t_0}^t)\right)dt + \sigma \; dB_t
\end{equation}
where $b:\mathcal{C}^0([-t_0,0],\mathbb{R})\rightarrow\mathbb{R}$ is a bounded measurable function.

\begin{rem}
Having obtained a bound for $\|b\|_{\infty}$, we can nevertheless make $\sigma$ vary as it does not appear in the various computations in order to obtain a larger drift (though, obviously, it also entails a larger diffusion coefficient).
\end{rem}

\subsection{Numerical applications}

\subsubsection{Evolution of the bound $b(\varepsilon)$}

As $\ln(7) \sim 1.95$, we can for instance choose $\delta = \frac{2}{\lambda}$.\\
If we do so, then $M_{\delta} =(1-e^{-4})^{-3/8}(1-50 e^{-4}+49 e^{-8})^{-1/16}\simeq 1.16$.\\


The evolution of the bound $b(\varepsilon)$, defined in (\ref{38}), supposing that $\lambda=1$, when $\varepsilon$ evolves between $0$ and $10$ can be seen in figure \ref{figure1}.\\

\begin{figure}
   \begin{minipage}[c]{.49\linewidth}
      \includegraphics[scale=0.17]{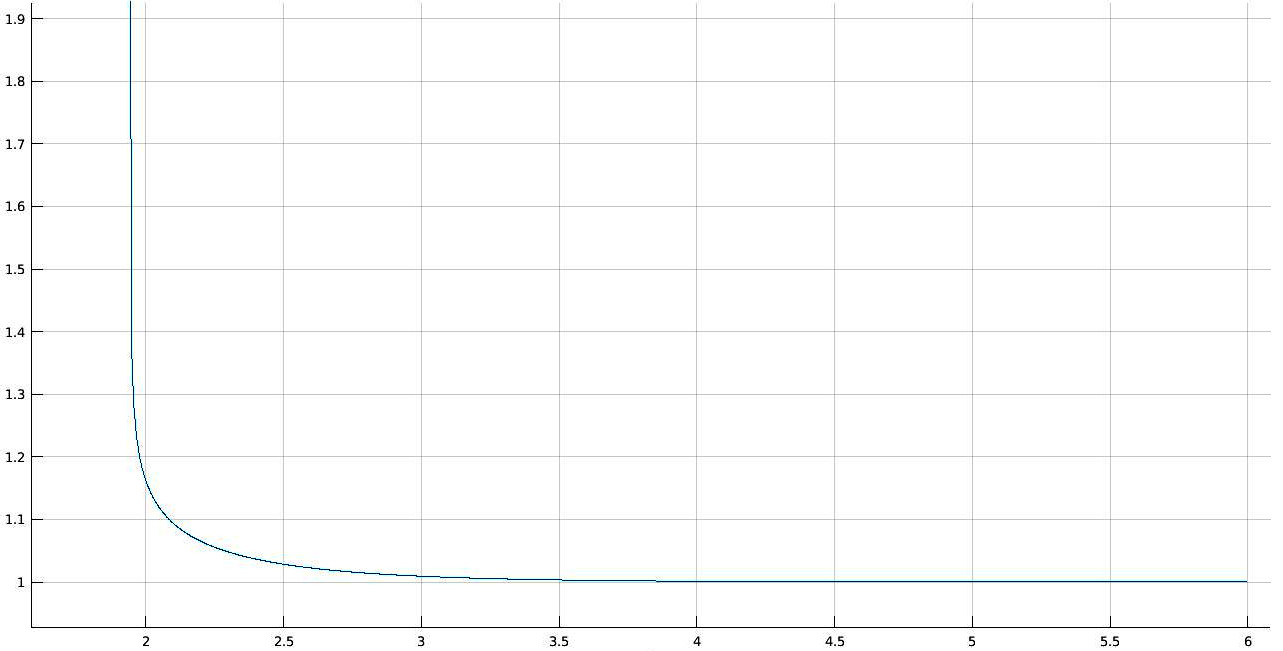}
   \end{minipage}
   \begin{minipage}[c]{.49\linewidth}
      \includegraphics[scale=0.17]{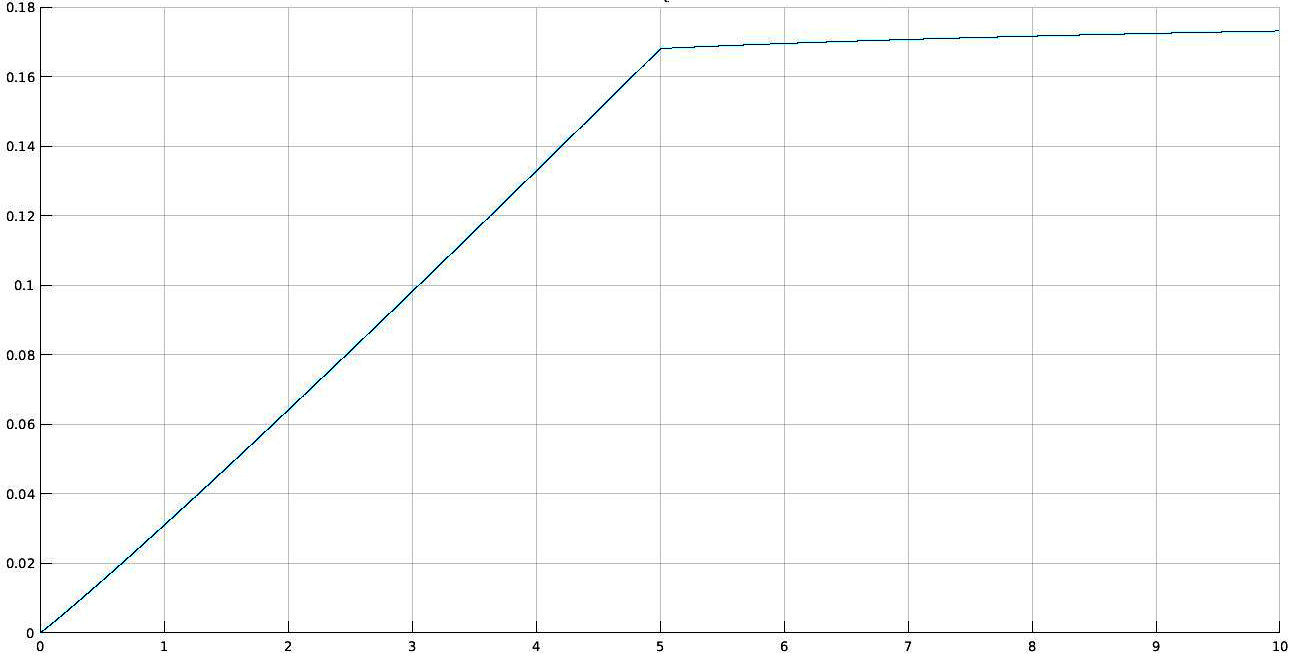}
   \end{minipage}
   \caption{On the left, the graph of $\delta\mapsto M_{\delta}$ when $\lambda=1$ ; on the right, the evolution of $\varepsilon\mapsto b(\varepsilon)$ for $\lambda=1$ and $\delta=2$}\label{figure1}

\end{figure}

This curve is, as expected, non-decreasing (remark that it is not linear by parts) : the smaller $\varepsilon$, the smaller $b(\varepsilon)$ : the result will hold only for very small perturbations $b$ of the reference process.\\

\subsubsection{About the choice of $\delta$}

For now, fix $\lambda=1$. Here, we are interested in the optimal value for $\delta$, that is the $\delta$ which allows the largest possible window of choice for $\|b\|_{\infty}$ in order to satisfy proposition \ref{prop2}.\\

For instance, we set $\varepsilon=1$ and $a=a_1$. Then, considering $b_1:=b(1)$ as a function of $\delta$, we have :
\begin{equation*}
b_1(\delta)=\frac{1}{2\sqrt{2e}(8!)^{1/8}} \left(\delta \;M_{\delta}^{7/4}\left(1 - \frac{C_P}{8 \delta} \ln \left( \frac{2 M_{\delta}^4+1}{16 M_{\delta}^8}\left( \sqrt{1+\frac{1}{32(1+2M_{\delta}^4)^2}}-1 \right) \right)\right) \right)^{-1/2}
\end{equation*}
with $M_{\delta}=(1-e^{-2 \delta})^{-3/8}(1-50 e^{-2 \delta}+49 e^{-4 \delta})^{-1/16}$.\\

We wish to determine, \begin{equation*}
\delta_{*}=\underset{\delta>\ln(7)}{\text{argmax}}\; b_1(\delta)
\end{equation*}
i.e. the value of $\delta$ for which proposition \ref{prop1} will be satisfied for the largest value of $\|b\|_{\infty}$.\\

Differentiating $b_1$ with respect to $\delta$ in order to find the points where the derivative vanishes looks a rather hopeless case.\\

We can nevertheless draw the graph of $b_1$ in figure \ref{figure2} : there seems to be a clear maximum, in this case for $\delta$ close to $2.22$, with a upper-bound for $\|b\|_{\infty}$ around $0.0325$.\\

\begin{figure}
\begin{center}
\includegraphics[scale=0.28]{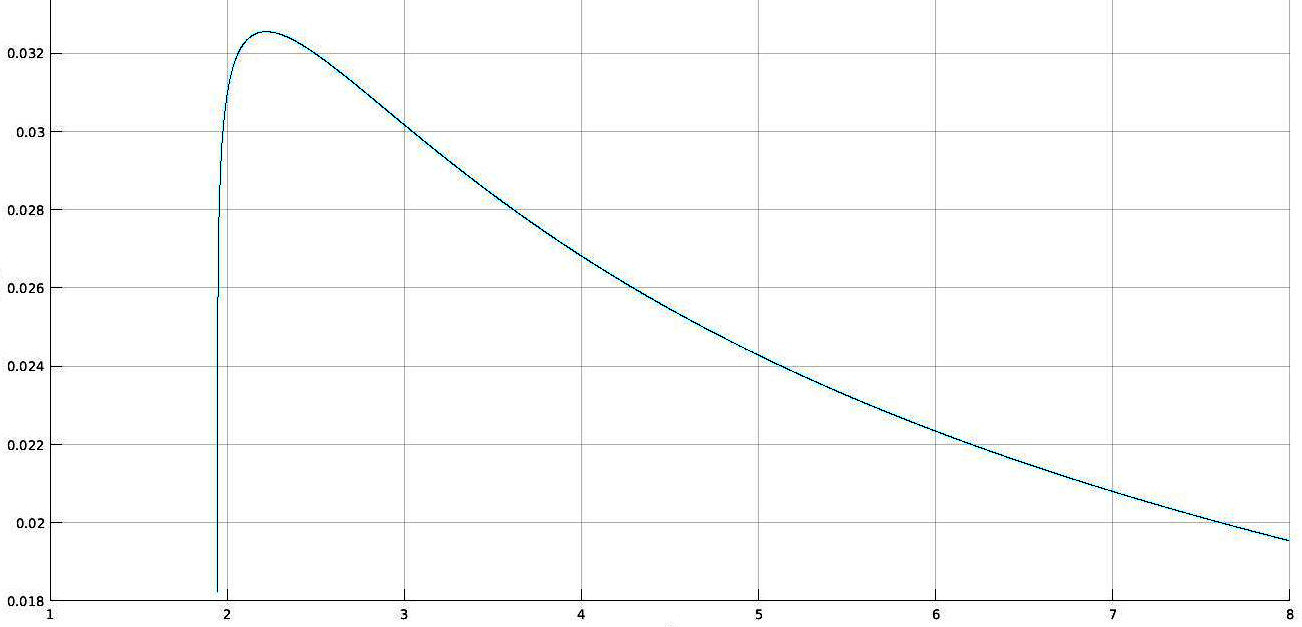}
\caption{$b_1$ as a function of $\delta$ with $\lambda=1$}\label{figure2}
\end{center}
\end{figure}

Other values of $\lambda$ provide similar representations, on a different scale. We observe a surprising correlation, at least at first glance, when looking for the corresponding values for other choices of $\lambda$ ; they are gathered in the following table :\\

\begin{center}
$\begin{array}{|c|c|c|c|c|c|c|c|c|}
\hline 
\lambda & 0.01 & 0.1 & 0.5 & 1 & 2 & 5 & 10 & 100 \\ 
\hline 
\delta_{*} & 222 & 22.2 & 4.44 & 2.22 & 1.11 & 0.44 & 0.222 & 0.0222 \\ 
\hline 
b_1(\delta_{*}) & 0.00325 & 0.0103 & 0.0230 & 0.0325 & 0.0460 & 0.0728 & 0.103 & 0.325 \\ 
\hline 
\end{array}$
\end{center}

From this, we can conjecture approximate links between $\lambda$, $\delta_{*}$ and $b_1(\delta_{*})$ : it seems that, one the one hand,
\begin{equation}
\delta_{*}\simeq \frac{2.220296}{\lambda}
\end{equation}

\begin{figure}
   \begin{minipage}[c]{.49\linewidth}
      \includegraphics[scale=0.17]{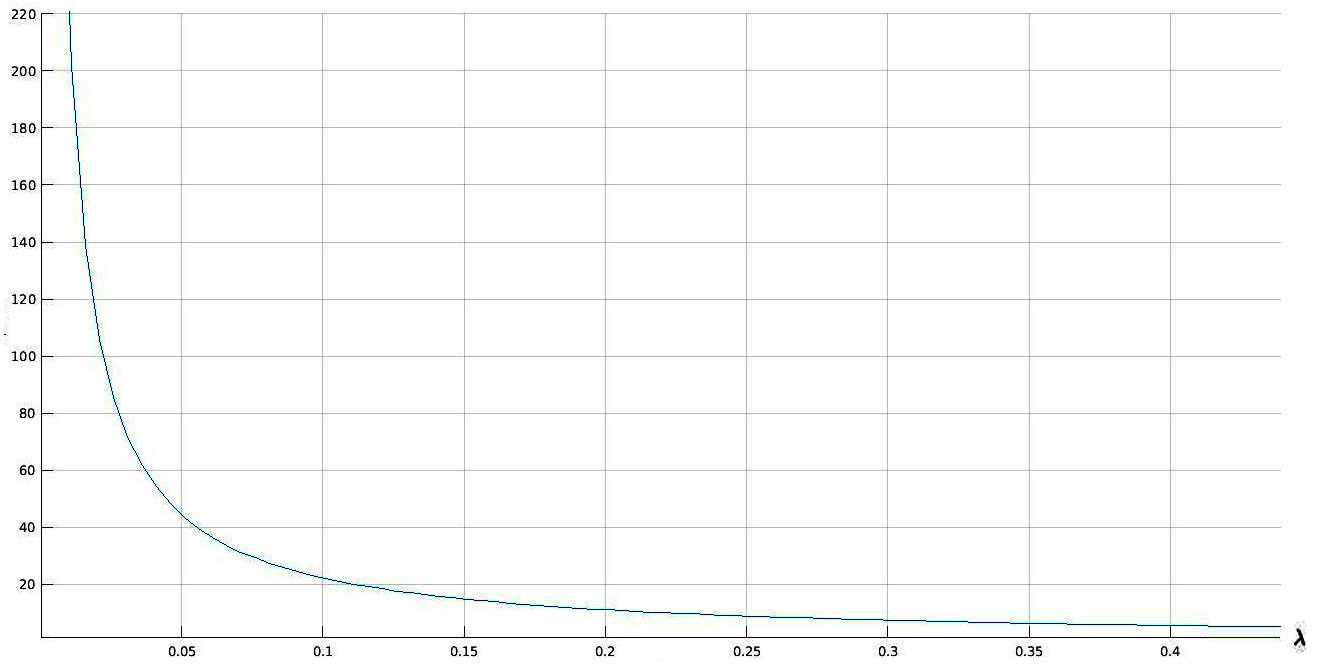}
   \end{minipage}
   \begin{minipage}[c]{.49\linewidth}
      \includegraphics[scale=0.17]{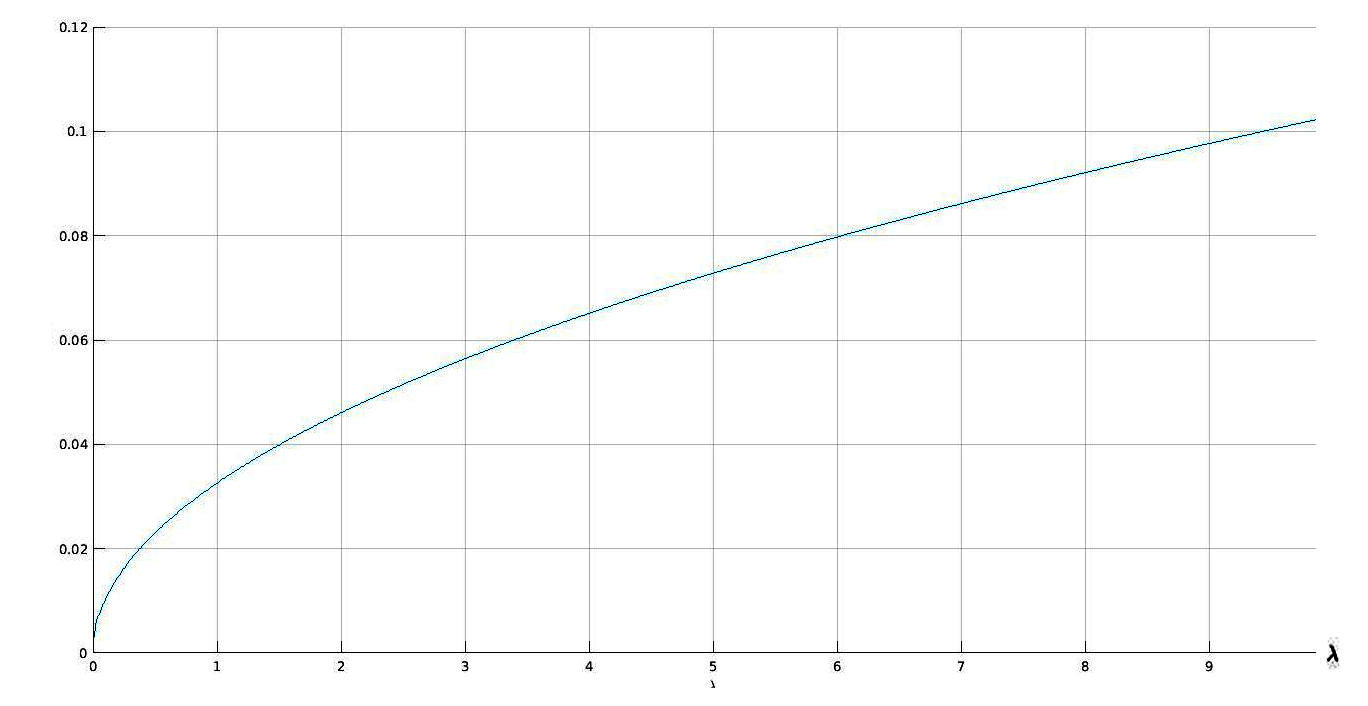}
   \end{minipage}
   \caption{On the left, the optimal value of $\delta$, $\delta_{*}$, as a function of $\lambda$; on the right, the optimal bound $b_1(\delta_{*})$, depending on $\lambda$}\label{figure3}

\end{figure}


and, on the other hand,
\begin{equation}
b_1(\delta_{*})\simeq 0.3255108 \sqrt{\lambda} \simeq \frac{0,04850326}{\sqrt{\delta_{*}}}
\end{equation}

Thus, we conjecture that the larger $\lambda$ (and thus the reference drift term), the smaller $\delta$ and the larger $b_1(\delta_{*})$ (and thus a larger window of possibilities for the perturbation $b$). 


\begin{rem}
The so-called optimal bound for $\|b\|_{\infty}$, that is $b_1(\delta_{*})$, is the one discusses in Proposition \ref{prop1}, and is not linked with the one, unknown, that appears in Theorem \ref{th1}, about which, as previously mentioned, we are not able to say anything.
\end{rem}

\emph{Acknowledgements :} I would like to thank Sylvie Roelly for her sustained help during the writing of this paper.

\bibliographystyle{plain}
\bibliography{biblio}
\end{document}